\documentclass[reqno,12pt]{amsart}
\usepackage{amsmath, latexsym, amsfonts, amssymb, amsthm}
\usepackage{mathrsfs,enumerate}

\setlength{\oddsidemargin}{5mm}
\setlength{\evensidemargin}{5mm}
\setlength{\textwidth}{150mm}


\newtheorem{theorem}{Theorem}[section]
\newtheorem{lemma}[theorem]{Lemma}
\newtheorem{prop}[theorem]{Proposition}
\newtheorem{coro}[theorem]{Corollary}

\newcommand{\cP}{{\ensuremath{\mathcal P}} }

\newcommand{\cX}{{\ensuremath{\mathcal X}} }

\newcommand{\bbE}{{\ensuremath{\mathbb E}} }
\newcommand{\E}{{\ensuremath{\mathbb E}} }

\newcommand{\N}{{\ensuremath{\mathbb N}} }

\newcommand{\bbP}{{\ensuremath{\mathbb P}} }

\newcommand{\R}{{\ensuremath{\mathbb R}} }

\newcommand\I{\operatorname{I}}
\newcommand\J{\operatorname{J}}
\newcommand\U{\Lambda}
\newcommand\V{\operatorname{V}}
\renewcommand\H{\mathbf{H}}

\newcommand{\mc}[1]{{\mathcal #1}}

\newcommand{\mb}[1]{{\mathbf #1}}
\newcommand{\bb}[1]{{\mathbb #1}}
\newcommand{\eps}{\varepsilon}

\newenvironment{rostera}{\begin{enumerate}[\upshape(a)]}{\end{enumerate}}
\newfont{\indic}{bbmss12}
\def\un#1{\hbox{{\indic 1}$_{#1}$}}
\usepackage{color}
\definecolor{light}{gray}{.80}

\author{M. Mariani}
\author{L. Zambotti}
\begin{document}
\title[Large Deviations for Renewal Markov Processes]{Large deviations for the empirical measure of heavy tailed Markov renewal processes}

\date{}

\begin{abstract}
A large deviations principle is established for the joint law of the empirical measure and the flow measure of a renewal Markov process on a finite graph. We do not assume any bound on the arrival times, allowing heavy tailed distributions. In particular, the rate functional is in general degenerate (it has a nontrivial set of zeros) and not strictly convex. These features show a  behaviour highly different from what one may guess with a heuristic Donsker-Varadhan analysis of the problem.
\end{abstract}

\subjclass[2010]{60F10, 60K15}
\keywords{Large deviations, empirical measure, Markov renewal processes, heavy tails}

\maketitle

\section{Introduction and main results}
\label{s:1}

\subsection{A motivating example} 
\label{ss:1.1}
Consider a finite collection $E$ of servers (or websites) in a given network, and a user who visits the servers and downloads data from them. It has been observed empirically that the size of downloaded files from the web has heavy tails (e.g.\ a polynomial decay at infinity), and thus a sharp statistical treatment of the matter can be difficult, see e.g.\ \cite{web}. 

We denote by $\psi_x$ the probability distribution on $]0,+\infty[$ of the size of files downloaded by a user visiting the server $x \in E$, and by $p_{x,y}$ the probability that a user visiting $x\in E$ will next visit $y\in E$. Let us suppose that the cost of downloading a file of size $\tau$ from the site $x$ is $f(x,\tau)$ and the cost of switching from server $x$ to server $y$ is $g(x,y)$; after downloading an amount $t$ of data, the servers $x_1,\ldots,\,x_{N_t} \in E$ will have been visited in sequence, with corresponding amounts of data $(\tau_1,\ldots,\,\tau_{N_t})$ downloaded respectively on each server. The cost of providing such a service (or the price asked for it) will depend on several factors, a reasonable form being
\begin{equation}
\label{e:measflow}
\sum_{i=1}^{N_t} \tau_{i} \, f(x_i,\tau_i) + \sum_{i=1}^{N_t} g(x_i,x_{i+1}).
\end{equation}
In the concrete example where the user is a www-crawler, the total time to gather data is indeed the "cost" of the service, so that $f(x,\tau)$ measures indeed the transmission performance of server $x$, while $g(x,y)$ may depend on servers lag times. The analysis of large deviations quantities \eqref{e:measflow} in the (relevant) limit where the total downloaded data are large $t\to +\infty$, is exactly equivalent to the large deviations of the empirical measures and flow considered below in this paper (however $t$ is hereafter interpreted as a time).


Markov processes with heavy tailed waiting times appear in several physical or biological models, see e.g.\ \cite{basbov, condpnas, hotscatterer}.
Large deviations principles for processes driven by renewal phenomena have been extensively analysed in the last decade, see e.g. \cite{duto, dumet, duro, ganesh, russ}. Typical techniques include sharp renewal estimates \cite[Chapter XIII]{asmussen} and so-called contraction principles via inversion maps \cite{russ, dumet}. In particular one can obtain large deviations principles for the renewal version of processes whose detailed asymptotic properties are known, using inversion maps \cite{russ, dumet}. In this paper, we consider a case in which these techniques do not work due to the presence of heavy tailed distributions of the arrival times of the renewal processes considered. To put it shortly, in our setting not only the usual inversion map is not continuous (and thus the contraction principle is not allowed), but a naive application of this strategy would suggest indeed a wrong result.

This paper has been inspired by \cite{bfg}, which considers the case of a countable state space $E$ and exponential waiting times, which make $(x_{N_t})_{t\geq 0}$ a Markov process.

\subsection{Main results}
\label{ss:1.2}
Let $E$ be a finite set, equipped with its discrete topology; elements of $E$ are denoted $x,y$. The set $[0,+\infty]$ will be equipped with any, compatible, complete separable metric (for instance, make it isometric to $[0,1]$); variables running on $[0,+\infty]$ will be denoted $s,\,t,\,\tau$. For a separable metric space $X$, hereafter $\mc P(X)$ denotes the set of Borel probability measures on $X$. For $\mu \in \mc P(X)$ and $f$ a measurable, $\mu$-integrable functions on $X$, $\mu(f)$ denotes the integral of $f$ with respect to $\mu$. $\mc P(X)$ is the equipped with the narrow topology, namely the weakest topology such that the maps $f \mapsto \mu(f)$ is continuous for any continuous bounded function $f$ on $X$. For $\mu,\,\nu \in \mc P(X)$, $\H(\nu\,|\,\mu)$ denotes the relative entropy of $\nu$ with respect to $\mu$:
\begin{equation}
\label{e:relent}
\H(\nu\,|\,\mu):=\sup_{\varphi \in C_{\mathrm{b}}(X)} \nu(\varphi) - \log \mu(e^{\varphi})
\end{equation}

The process we will next introduce is defined once the three following objects are given.
\begin{rostera}
\item A map $E\times E \ni (x,y) \mapsto p_{x,y} \in [0,1]$, which is an \emph{irreducible} Markov kernel on the \emph{finite} set $E$.
\item A map $E \ni x \mapsto \psi_x \in \mc P([0,+\infty])$, such that $\psi_x(\{0\})=\psi_x(\{+\infty\})=0$ for all $x\in E$. In particular no moment bound is assumed on $\psi_x$.
\item An arbitrary initial measure $\gamma \in \mc P(E)$.
\end{rostera}

We then consider the Markov renewal process $(X_k,\tau_{k+1})_{k\geq 0}$ defined on a probability space $(\Omega,\mathfrak F,\bbP)$ uniquely characterised by the two following properties.
\begin{itemize}
\item $(X_k)_{k\geq 0}$ is an irreducible Markov chain on 
$E$ with probability transition matrix $(p_{x,y})_{x,y\in E}$, and initial distribution $\gamma$ (for $X_0$).
\item $(\tau_i)_{i\geq 1}$ is a random sequence in $]0,+\infty[$, such that conditionally to $(X_k)$ the $\tau_i$ are independent and have distribution
\begin{equation*}
\bbP(\tau_i\in A \, | \, (X_k)_{k\geq 0}) = \psi_{X_{i-1}}(A), \qquad A\subset ]0,+\infty[
\end{equation*}
\end{itemize}
We refer to \cite[VII.4]{asmussen} for the presentation of the Markov renewal processes framework introduced above. Note that often one assumes the law of the $\tau_i$ conditioned to $(X_k)_{k \ge 0}$ to depend both on $X_{i-1}$ and $X_i$ (namely, one may replace $\psi_{X_{i-1}}$ with a $\psi_{X_{i-1},X_i}$ above). However, one can always reduce to the case here described by considering the Markov chain $(Y_k)_{k \ge 0}$, $Y_k:=(X_k,X_{k+1})$ instead. As we have no hypotheses on $\psi_x$, this doubling-variables procedure is fully compatible with our framework.

For $t \ge 0$ and $n \ge 0$ an integer, define the switching times $S_n$ and the number $N_t$ of switches up to time $t$ as
\begin{equation*}
S_n:=\sum_{i=1}^n \tau_i, \qquad
N_t:= \sum_{n=1}^\infty \un{(S_n\leq t)} = \inf\{n \geq 0: S_{n+1}>t \},
\end{equation*}
where we understand empty sums to vanish and $\inf \emptyset=+\infty$.

Fixed $t>0$, we then define the empirical measure $\mu_t \in \mc P(E \times [0,+\infty])$ of $(X_{N_s},\tau_{N_s})$ up to time $t$ by requiring for all $f\in C_{\mathrm{b}}(E \times [0,+\infty])$
\begin{equation}
\label{def:emp mes}
\begin{split}
   \mu_t(f) & :=\frac1t \int_0^t f(X_{N_s},\tau_{N_s+1}) \, ds \\ & =
    \frac{1}{t}\sum_{k=1}^{N_t}\tau_{k} \, f(X_{k-1},\tau_k)+\frac{t-S_{N_t}}{t}f(X_{N_t},\tau_{N_t+1}),
    \end{split}
\end{equation}
Similarly we define the empirical flow $Q_t \in C(E\times E; [0,+\infty[)$ as
\begin{equation}\label{def:empflow}
  Q_t(x,y) := \frac{1}{t} \sum_{k=1}^{N_t+1} \un{(X_{k-1}=x,X_{k}=y)}
      \qquad x,y\in E
    \end{equation}
Note that the process $(Z_t:=X_{N_t})_{t\ge 0}$ has a natural interpretation. Pick an initial datum $Z_0=X_0$ with law $\gamma$ on $E$, and next a time $\tau_1>0$ with law $\psi_{X_0}$. At time $\tau_1$, $Z_t$ jumps to $X_1$, (chosen  on $E$ with distribution $p_{X_0,\cdot}$). Now $Z$ spends a time $\tau_2$ (chosen with law $\psi_{X_1}$) at $X_1$, next jumping at $X_2$ at time $\tau_1+\tau_2$ and so on. Thus, $\mu_t$ is the joint empirical law of the process $Z_t$ and the associated inter-jumps times, while $Q_t(x,y)$ is the total number of jumps of $Z_t$ from $x$ to $y$ up to time $t$.

We want to investigate the large deviations of the joint law of $(\mu_t,Q_t)$ under the probability $\bb P$ of the renewal Markov chain $(X_k,\tau_{k+1})_{k \ge 0}$. Before stating our large deviations result, we need some further definition to introduce the spaces and the rate functional.

Let $\Lambda:= \mc P(E\times [0,+\infty]) \times C(E\times E;[0,+\infty[)$, which is a completely metrizable, separable topological space under the narrow topology of $\mc P(E\times [0,+\infty]) $ and the uniform topology of $C(E\times E;[0,+\infty[)$. Then define
$\xi \in C(E;[0,+\infty])$, and  $\Lambda_0 \subset \Lambda$ as
\begin{equation*}
\xi(x) := \sup \big\{ c\ge 0\,:\: \int \psi_x(d\tau) \,e^{c \tau} <+\infty \big\}
\end{equation*}
\begin{equation}
\label{def:U}
\begin{split}
 \Lambda_0:= \left\{  (\mu,Q)\in \Lambda\, :\:
\int_{[0,+\infty]}\!\!\!\!\!\!\! \mu(x,d\tau)\, \frac{1}{\tau} = \sum_{y\in E} Q(x,y)= \sum_{y\in E} Q(y,x), \, \forall  x\in E\right\}
\end{split}
\end{equation}
For $(\mu,Q) \in \Lambda_0$ define the Markov transition kernel $p^Q$ and the map $E \ni x \to \psi^\mu_x \in \mc P([0,+\infty])$ as
\begin{equation}
\label{overlineQ}
p^Q_{x,y} := \frac{Q(x,y)}{\sum_{z\in E} Q(x,z)}
\qquad x,y\in E
\end{equation}
\begin{equation*}
\psi^{\mu}_x(d\tau):=\frac{1}{\sum_{z\in E} Q(x,z)}\, \frac{1}{\tau}\, \mu(x,d\tau)
\qquad x\in E
\end{equation*}
where, if $Q(x,\cdot) \equiv 0$, the (irrelevant) choice $p^Q_{x,y} = p_{x,y}$, $\psi_x^\mu=\psi_x$ is understood.
Then define $\I \colon \Lambda \to [0,+\infty]$ as
\begin{equation}\label{I}
\I(\mu,Q) = 
\begin{cases}
&\displaystyle{ \!\!\!\!\!\!\!\!\!\! \!\!\!\!\! \sum_{x\in E} \int_{[0,+\infty]} 
\mu(x,d\tau)\,\left[ 
\frac{\H\big( p^Q_{x,\cdot} \, |\,  p_{x,\cdot}  \big) 
+ \H\big( \psi^\mu_x \, | \, \psi_x  \big)}{\tau} +  \xi(x) \un {\{\infty\}}(\tau) \right]
} 
\\  &  \textrm{if $(\mu,Q)\in \Lambda_0$} \\
\\  +\infty & \textrm{otherwise}.
\end{cases}
\end{equation}
where hereafter we adopt the convention $0\cdot\infty=0$. A variational carachterization of $\I$ is given in Proposition~\ref{l:3.4}.

\begin{prop}\label{prop:bon fonction de I}
The functional $\mathrm{I}$ is \emph{good}, namely for each $M>0$ the set $\{(\mu,Q):\mathrm{I}(\mu,Q)\leq M\}$ is compact.
\end{prop}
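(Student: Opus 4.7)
The plan breaks into three steps: lower semicontinuity of $\I$, a priori compactness of the $\mu$-component of sublevel sets, and a uniform upper bound on the flow $Q$ on sublevel sets. Since $\Lambda$ is metric, the first yields closedness of $\{\I\le M\}$, and together with the other two it will be compact.

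For lower semicontinuity, I would simply invoke the variational characterization of $\I$ from Proposition~\ref{l:3.4}, which presents $\I$ as the supremum of a family of continuous affine functionals on $\Lambda$. Any such supremum is automatically lsc, bypassing the need to handle the individual terms of \eqref{I} (the entropies, the $1/\tau$ integrations, and in particular the delicate term $\xi(x)\mu(\{x\}\times\{+\infty\})$) separately. For the $\mu$-component, since $E\times[0,+\infty]$ is compact, the set $\cP(E\times[0,+\infty])$ is compact in the narrow topology, so nothing more is needed.

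The main obstacle is the uniform bound on $Q$, since $C(E\times E;[0,+\infty[)\simeq \R_+^{E\times E}$ is not compact. Set $J(x):=\sum_{y\in E}Q(x,y)$; on $\Lambda_0$ this equals $\int\mu(x,d\tau)/\tau$, so $Q(x,y)\le J(x)$ and it suffices to bound $J(x)$. Since all terms in \eqref{I} are non-negative,
\begin{equation*}
\I(\mu,Q)\;\ge\;J(x)\,\H(\psi^\mu_x\,|\,\psi_x),\qquad x\in E.
\end{equation*}
Assume by contradiction the existence of a sequence $(\mu_n,Q_n)\in\{\I\le M\}$ with $J_n(x)\to+\infty$ for some $x$. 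Using $\int \tau\,\psi^{\mu_n}_x(d\tau)=\mu_n(\{x\}\times(0,+\infty))/J_n(x)\le 1/J_n(x)\to 0$ and Markov's inequality with threshold $c_n:=J_n(x)^{-1/2}$ yields $\psi^{\mu_n}_x([c_n,+\infty])\le 1/(c_n J_n(x))=J_n(x)^{-1/2}\to 0$, so $\psi^{\mu_n}_x([0,c_n))\to 1$. Since $\psi_x(\{0\})=0$, one also has $\psi_x([0,c_n))\to 0$. The data processing inequality applied to the partition $\{[0,c_n),[c_n,+\infty]\}$, combined with the elementary fact that the Bernoulli relative entropy $\mathrm{Ber}(p_n)$ vs $\mathrm{Ber}(q_n)$ diverges when $p_n\to 1$ and $q_n\to 0$, gives $\H(\psi^{\mu_n}_x\,|\,\psi_x)\to+\infty$. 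Combined with $J_n(x)\to+\infty$, the product diverges, contradicting $\I(\mu_n,Q_n)\le M$.

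This is precisely the step where the standing assumption $\psi_x(\{0\})=0$ is essential: without it, sublevel sets of $\I$ would not be tight in the flow direction. Once $J$ is bounded, $Q$ lies in a compact cube of $\R_+^{E\times E}$, and the proof concludes by combining compactness of the $\mu$-component, boundedness of $Q$, and closedness from lsc.
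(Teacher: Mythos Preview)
Your lower-semicontinuity step is the same as the paper's: both invoke the variational formula in Proposition~\ref{l:3.4} and conclude that $\I$, as a supremum of the $I_{h,H}$, is lsc. One small correction: the $I_{h,H}$ are not continuous on $\Lambda$, since $h_x(\tau)=\varphi_x(\tau)/\tau+c(x)\un{]M(x),+\infty]}(\tau)$ is only lower semicontinuous and unbounded near $\tau=0$; but lsc is all you need, and the paper makes the same claim.

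Where you genuinely differ is in the coercivity step. The paper argues as follows: from $J_n(x)\,\H(\psi^{\mu_n}_x\,|\,\psi_x)\le M$, pass to a subsequence along which $J_n(x)\to a$ and $\H(\psi^{\mu_n}_x\,|\,\psi_x)\to b$; if $b=+\infty$ then $a=0$, while if $b<+\infty$ a Jensen-type entropy inequality \eqref{entrine} gives tightness of $(\psi^{\mu_n}_x)$ at $0$, and then a uniform-integrability argument shows $\mu_n(x,1/\tau)$ converges to something finite, hence $a<+\infty$. Your route is shorter and more elementary: you show directly that $J_n(x)\to+\infty$ forces the first moment of $\psi^{\mu_n}_x$ to vanish, hence (Markov) $\psi^{\mu_n}_x$ concentrates on $[0,c_n)$ with $c_n\to 0$, while $\psi_x([0,c_n))\to 0$ because $\psi_x(\{0\})=0$; the data-processing inequality on this two-set partition then makes the entropy itself blow up, contradicting the bound. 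Both arguments pivot on the same hypothesis $\psi_x(\{0\})=0$ and the same product inequality, but yours avoids the subsequence extraction, the case split, and the uniform-integrability step.
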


The following theorem is the main result of this paper.
\begin{theorem}
\label{thm:main result}
The law $\mathbb{P} \circ(\mu_t,\,Q_t)^{-1}$ of $(\mu_t,\,Q_t)$ under $\bb P$ satisfies a large deviations principle as $t\to +\infty$, with speed $t$ and good rate $\I$. Namely, for each closed set $\mc C \subset \Lambda$ and each open set $\mc O \subset \Lambda$
\begin{equation}
\label{eq:upper bound}
\limsup_{t\to +\infty}\frac{1}{t}\log\mathbb{P}((\mu_t,Q_t)\in \mathcal{C})\leq -\inf_{(\mu,Q)\in \mathcal{C}}\mathrm{I}(\mu,Q)
\end{equation}
\begin{equation}
\label{eq:lower bound}
\liminf_{t\to +\infty}\frac{1}{t} \log\mathbb{P} ((\mu_t,Q_t)\in \mathcal{O})\geq -\inf_{(\mu,Q)\in \mathcal{O}}\mathrm{I}(\mu,Q)
\end{equation}
\end{theorem}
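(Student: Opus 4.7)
The plan is to prove the upper and lower bounds separately, linked by the variational characterisation of $\I$ from Proposition~\ref{l:3.4} and an exponential change of measure.

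\textbf{Upper bound.} Since $E$ is finite and $[0,+\infty]$ is compact, $\mc P(E\times [0,+\infty])$ is already compact in the narrow topology, so exponential tightness of $(\mu_t,Q_t)$ reduces to controlling the total mass $\sum_{x,y} Q_t(x,y)=(N_t+1)/t$; a bound on $\bbP(N_t\geq Mt)$ for large $M$ (using that $\psi_x(\{0\})=0$ and a standard Cram\'er argument for the i.i.d.\ holding times) delivers the required decay. Given tightness, it suffices to establish, for each $(\mu^*,Q^*)\in \Lambda$ and each $\epsilon>0$, the local bound $\limsup_t t^{-1}\log \bbP((\mu_t,Q_t)\in B_\epsilon(\mu^*,Q^*))\leq -\I(\mu^*,Q^*)+\epsilon$. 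Using Proposition~\ref{l:3.4}, select $U\in C_{\mathrm b}(E\times [0,+\infty])$ and $V\in\R^{E\times E}$ so that $\mu^*(U)+\sum_{x,y} V(x,y)Q^*(x,y)$ realises $\I(\mu^*,Q^*)$ up to $\epsilon$. An exponential Chebyshev inequality then reduces matters to bounding
\[
\Phi_t(U,V):=\frac{1}{t}\log \bbE\Bigl[\exp\Bigl(t\mu_t(U)+t\textstyle\sum_{x,y} V(x,y) Q_t(x,y)\Bigr)\Bigr].
\]
Conditioning on $(X_k)$, each holding time contributes an independent factor $\int \psi_{X_{k-1}}(d\tau)\,e^{\tau U(X_{k-1},\tau)+V(X_{k-1},X_k)}$; a Perron--Frobenius analysis of the resulting tilted transition matrix on the finite set $E$, together with a renewal-theoretic averaging of the time horizon, yields $\limsup_t \Phi_t(U,V)\leq \Phi(U,V)$ with $\Phi$ the Legendre dual of $\I$.

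\textbf{Lower bound.} For $(\mu^*,Q^*)\in \Lambda_0$ with $\I(\mu^*,Q^*)<+\infty$ and, temporarily, no mass of $\mu^*$ at $\tau=+\infty$, construct a tilted Markov renewal law $\bbP^*$ with transition kernel $p^{Q^*}$ and holding-time laws $\psi^{\mu^*}_x$. The chain under $\bbP^*$ is irreducible by the constraint defining $\Lambda_0$, and the standard Markov renewal strong law yields $(\mu_t,Q_t)\to (\mu^*,Q^*)$ $\bbP^*$-a.s. The log-Radon--Nikodym derivative up to the $N_t$-th switch equals
\[
\sum_{k=1}^{N_t} \log\frac{p^{Q^*}_{X_{k-1},X_k}}{p_{X_{k-1},X_k}} + \sum_{k=1}^{N_t} \log\frac{d\psi^{\mu^*}_{X_{k-1}}}{d\psi_{X_{k-1}}}(\tau_k),
\]
whose $t^{-1}$-normalised limit under $\bbP^*$ equals $\I(\mu^*,Q^*)$ by the ergodic theorem, since $N_t/t$ converges to $\sum_x \bar Q^*(x)$ and the stationary distribution of $p^{Q^*}$ is proportional to $\bar Q^*$. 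The standard change-of-measure inequality then produces $\liminf_t t^{-1}\log \bbP(\mc O)\geq -\I(\mu^*,Q^*)$ for open $\mc O\ni(\mu^*,Q^*)$.

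\textbf{Main obstacle.} The principal difficulty is the singular term $\xi(x)\,\un{\{+\infty\}}(\tau)$ in $\I$, which permits a finite rate for target measures carrying atoms at $\tau=+\infty$: no genuine Markov renewal process has literally infinite holding times, and the naive tilting above breaks down. I would handle this by truncation, writing $\mu^*=\mu^*_{\mathrm{fin}}+\sum_x m_x\,\delta_{(x,+\infty)}$, approximating each atom by mass concentrated at a large $\tau=L$, and constructing the corresponding tilted law $\bbP^{*,L}$. As $L\to +\infty$, Cram\'er's theorem applied to $\psi_x$ shows that forcing a holding time to exceed $L$ costs $-L\xi(x)+o(L)$, and once weighted by the rate $m_x/L$ of such long holding times per unit time this reproduces exactly $m_x\xi(x)$, matching the $\xi$-contribution to $\I$. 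Interchanging the limits $t\to+\infty$ and $L\to +\infty$ (legitimised by the goodness of $\I$ from Proposition~\ref{prop:bon fonction de I}) and passing to the general case by lower semicontinuity and density of the finite-rate pairs without $\{+\infty\}$-atoms, completes the lower bound.
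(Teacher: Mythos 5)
Your overall scheme (change of measure for the upper bound, tilting plus a strong law for the lower bound, approximation for the $\{+\infty\}$ atoms) is the same as the paper's, but several of the asserted bridging steps are either unproved or actually false, and the paper takes care to avoid precisely these pitfalls.

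\emph{Upper bound.} You propose computing the limit log--moment--generating function $\Phi(U,V)=\lim_t\Phi_t(U,V)$ via Perron--Frobenius and then asserting that $\Phi$ ``is the Legendre dual of $\I$''. This is the Donsker--Varadhan heuristic that the introduction explicitly warns against: with heavy tails the tilted kernel need not have the summability required for a clean Perron--Frobenius eigenvalue at all test points, the limit $\Phi$ need not be steep, and establishing $\Phi^*=\I$ is far from automatic. The paper sidesteps the whole computation. It restricts to the tilting class $\Gamma$ of \eqref{Gamma}, chosen so that $\psi_x(e^{\tau h_x})<1$ and $\sum_z p_{x,z}e^{H(x,z)}<1$; then the tilted law $\bbP^{(h,H)}$ is a genuine sub-probability normalisation, the Radon--Nikodym derivative on $\sigma((X_k,\tau_{k+1})_{k\le N_t+1})$ is exactly $\exp(t\I_{h,H}(\mu_t,Q_t)+O(1))$, and the Chebyshev step gives $t^{-1}\log\bbP(\mc A)\le -\inf_{\mc A}\I_{h,H}+O(1/t)$ with no limit MGF to compute. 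Proposition~\ref{l:3.4} identifies $\sup_\Gamma\I_{h,H}=\I$, and the minimax lemma converts the pointwise supremum into the compact upper bound. Note also that the relevant test functions $h^{\varphi,c,M}$ are \emph{not} bounded continuous on $E\times[0,+\infty]$ (the indicator $\un_{]M(x),+\infty]}$ is only l.s.c., and $\varphi_x(\tau)/\tau$ blows up at $0$), so restricting to $C_\mathrm{b}$ test functions as you do is not obviously adequate to recover the full $\I$; Proposition~\ref{l:3.4} is a genuine lemma, not a formal Fenchel--Moreau identity.

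\emph{Lower bound.} Two concrete gaps. First, you claim ``the chain under $\bbP^*$ is irreducible by the constraint defining $\Lambda_0$''. This is false: flow balance $\sum_yQ(x,y)=\sum_yQ(y,x)$ does not imply irreducibility of $p^{Q}$ (take $E=\{1,2\}$, $Q(1,1)=Q(2,2)=1$, all other entries $0$). One must work on the subclass $\U_{00}$ of \eqref{U00}, where irreducibility is \emph{imposed}, and then show that $\U_{00}$ is $\I$-dense, which is the content of Lemma~\ref{lem:omega dense}. Second, you compute the log-likelihood ratio up to the random time $N_t$ and invoke the ergodic theorem; but the relative entropy $\H(\mathbf{Q}_t\,|\,\mathbf{P}_t)$ must be controlled via a restriction to a \emph{fixed} $\sigma$-algebra. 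The paper's device is to tilt only the first $T_t=\lfloor(1+\delta)Zt\rfloor$ steps and then revert to the original kernel, so that $\H(\bbP_{t,\delta}\,|\,\bbP)$ is a sum of a deterministic number of one-step entropies, handled by the ergodic theorem after checking $\bbP_{t,\delta}(S_{T_t}>t)\to1$. Without some such truncation your ergodic-theorem step is not justified. Finally, your treatment of the $\{+\infty\}$ atoms --- ``interchanging the limits $t\to+\infty$ and $L\to+\infty$, legitimised by the goodness of $\I$'' --- is not a valid argument: goodness/l.s.c.\ gives $\liminf\I(\mu_n,Q_n)\ge\I(\mu,Q)$, the wrong inequality. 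What is needed is the $\limsup$ inequality $\limsup_n\I(\mu_n,Q_n)\le\I(\mu,Q)$ along a carefully built approximating sequence (spreading the atom over $[M_n(x),+\infty[$ according to $\psi_x$, not concentrating at a point, so that $\mu_n(x,\cdot)\ll\psi_x$ and the $\xi_x$ cost is realised); this is exactly what Lemma~\ref{lem:omega dense} proves and what your truncation-plus-Cram\'er heuristic would need to be turned into.
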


\subsection{Large Deviations for the empirical measure of $X_{N_\cdot}$} 
\label{ss:1.3}
Define $\I_1 \colon \mc P(E) \to [0,+\infty]$ as
\begin{equation}\label{def:I chapeau mu}
\mathrm{I}_1(\nu)=\inf\{\mathrm{I}(\mu,Q), \, (\mu,Q)\in \Lambda \,:\: \mu(dx,[0,+\infty])=\nu(dx)   \}.
\end{equation}
Theorem~\ref{thm:main result} yields, via a standard application of the so-called contraction principle \cite[Chapter~4.2.1]{Zeitouni}, the following corollary. It is however remarkable that $\mathrm{I}_1$ admits an explicit expression, see Proposition~\ref{exprI_1} below, generalizing \cite{duto} to our framework.
\begin{coro}\label{coro:result}
The law of $\frac 1t \int_{[0,t[} \delta_{X_{N_s}}\,ds$, the empirical measure of the process $X_{N_\cdot}$ satisfies a large deviations principle as $t\to +\infty$, with speed $t$ and good rate $\mathrm{I}_1$.
\end{coro}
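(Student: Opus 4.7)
The plan is a direct application of the contraction principle to Theorem~\ref{thm:main result}. First I would observe that the random measure $\nu_t := \frac1t\int_0^t \delta_{X_{N_s}}\,ds$ is exactly the first marginal of $\mu_t$ on $E$: taking $f\in C_{\mathrm b}(E)$ (seen as a function on $E\times[0,+\infty]$ not depending on $\tau$) in \eqref{def:emp mes}, one reads
\begin{equation*}
\mu_t(f) \;=\; \frac1t\int_0^t f(X_{N_s})\,ds \;=\; \nu_t(f),
\end{equation*}
so $\nu_t(dx) = \mu_t(dx,[0,+\infty])$ almost surely.

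Next I would introduce the marginal map $\pi \colon \Lambda \to \mc P(E)$ defined by $\pi(\mu,Q)(dx):=\mu(dx,[0,+\infty])$. Since $E$ is a finite (discrete) set and $E\times[0,+\infty]$ is equipped with the product of the discrete and the chosen compact metric on $[0,+\infty]$, the first-marginal map $\mc P(E\times[0,+\infty])\to\mc P(E)$ is continuous in the narrow topology (the test functions $\un{\{x\}}$ on $E$ lift to continuous bounded functions on $E\times[0,+\infty]$); together with the fact that $\pi$ ignores the $Q$-component, this shows that $\pi$ is continuous from $\Lambda$ to $\mc P(E)$. Hence the random variables $\nu_t = \pi(\mu_t,Q_t)$ are the continuous image of the pair $(\mu_t,Q_t)$.

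By Theorem~\ref{thm:main result}, $(\mu_t,Q_t)$ satisfies an LDP with speed $t$ and good rate $\I$. The contraction principle (\cite[Chapter~4.2.1]{Zeitouni}) therefore yields an LDP for $\nu_t$ with speed $t$ and good rate
\begin{equation*}
\tilde{\I}_1(\nu) \;=\; \inf\{\I(\mu,Q):(\mu,Q)\in\Lambda,\; \pi(\mu,Q)=\nu\},
\end{equation*}
which coincides with $\I_1$ as defined in \eqref{def:I chapeau mu}. Goodness of $\I_1$ is automatic from the contraction principle together with the goodness of $\I$ (Proposition~\ref{prop:bon fonction de I}) and continuity of $\pi$. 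There is essentially no obstacle in this step; the nontrivial content is entirely pushed into Theorem~\ref{thm:main result}, while the extra information announced in the statement—namely the explicit expression for $\I_1$—is deferred to Proposition~\ref{exprI_1} and requires a separate variational analysis of the infimum in \eqref{def:I chapeau mu} over the $\tau$-marginal of $\mu$ and over $Q$ subject to the flow constraint defining $\Lambda_0$.
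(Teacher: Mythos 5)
Your proposal is correct and is essentially the paper's own argument: the paper also derives the corollary by applying the contraction principle to Theorem~\ref{thm:main result} via the continuous marginal map $(\mu,Q)\mapsto\mu(\cdot,[0,+\infty])$, with the explicit formula for $\I_1$ deferred to Proposition~\ref{exprI_1}.
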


The rest of the paper is organized as follows. In section~\ref{s:2} the properties of the rate functional are discussed, and in sections \ref{sec:upbound}-\ref{sec:lowbound} the proofs of the large deviations principle are provided together with further remarks. In section~\ref{sec:contr} results concerning the contraction principle in section~\ref{ss:1.3} are proved.

\section{The functional $\I$}
\label{s:2}
In this section some \emph{deterministic} results concerning the functional $\I$ are established, and
Proposition~\ref{prop:bon fonction de I} is proved. We first remark an immediate identity.

\begin{lemma}\label{lem:algfact}
For all $a>0$ and $\pi,\psi\in \mathcal{P}([0,+\infty[)$ such that $\pi(1/\tau)<+\infty$
\begin{equation*}
\begin{split}
\pi(1/\tau) \,\H(\tilde{\pi}\,|\,\psi) & = \sup_{h}(\pi(h/\tau)-\pi(1/\tau)\log\psi(e^h))
\\ &
= \sup_{h:\psi(e^h)=a}(\pi(h/\tau)-\pi(1/\tau)\log\psi(e^h))=\sup_{h:\psi(e^h)=1}\pi(h/\tau)
\end{split}\end{equation*}
where suprema are taken over $h \in C_b(\mathbb{R}_+^\ast)$ and
\begin{equation*}
\tilde \pi(d\tau):= \frac{\tfrac{1}{\tau}\pi(d\tau)}{\pi(1/\tau)}
\end{equation*}
\end{lemma}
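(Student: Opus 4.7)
The plan is to derive all three equalities from the Donsker--Varadhan variational characterisation of relative entropy \eqref{e:relent}, using the key observation that the displayed functional is invariant under adding a real constant to the test function.

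First I would apply \eqref{e:relent} to the pair $(\tilde\pi,\psi)$. Since $\tilde\pi(h) = \pi(h/\tau)/\pi(1/\tau)$ by definition of $\tilde\pi$, this gives
\[
\H(\tilde\pi\,|\,\psi) \;=\; \sup_h\!\Bigl(\frac{\pi(h/\tau)}{\pi(1/\tau)} \;-\; \log\psi(e^h)\Bigr),
\]
and multiplying through by $\pi(1/\tau) > 0$ yields the first equality. One minor technicality is that the lemma's suprema run over $h \in C_b(\mathbb{R}_+^\ast)$, whereas \eqref{e:relent} is stated with test functions on the ambient space; this is harmless since $\pi(1/\tau) < +\infty$ forces $\pi(\{0\}) = 0$, and a routine truncation/density argument shows that bounded continuous functions on $\mathbb{R}_+^\ast$ already saturate the sup defining $\H(\tilde\pi\,|\,\psi)$.

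Next, I would exploit the translation invariance $F(h+c) = F(h)$ of the functional $F(h) := \pi(h/\tau) - \pi(1/\tau)\log\psi(e^h)$: indeed $\pi((h+c)/\tau) = \pi(h/\tau) + c\,\pi(1/\tau)$ and $\log\psi(e^{h+c}) = c + \log\psi(e^h)$, so the two $c$-dependent contributions cancel. Since any $h\in C_b(\mathbb{R}_+^\ast)$ satisfies $\psi(e^h) \in [e^{-\|h\|_\infty}, e^{\|h\|_\infty}] \subset \mathbb{R}_+^\ast$, one may always shift by the constant $c := \log a - \log\psi(e^h)$ to land in the level set $\{h : \psi(e^h) = a\}$ without altering $F(h)$. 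This yields the second equality, and specialising to $a = 1$ reduces $F(h)$ to $\pi(h/\tau)$ on the corresponding level set, giving the third equality.

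The lemma is essentially bookkeeping once the translation invariance of $F$ is recognised, so I do not expect any serious obstacle; the only point requiring a moment of care is the remark on the test function class in the first step.
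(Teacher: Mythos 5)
Your proposal is correct and follows essentially the same route as the paper: both rest on the Donsker--Varadhan formula \eqref{e:relent} applied to $(\tilde\pi,\psi)$ together with the observation that the functional $h\mapsto\pi(h/\tau)-\pi(1/\tau)\log\psi(e^h)$ is invariant under shifts $h\mapsto h+c$, which is exactly what the paper exploits by replacing $h$ with $h^a:=h-\log a$. You spell out the first equality (unconstrained sup $=\pi(1/\tau)\,\H(\tilde\pi|\psi)$) more explicitly than the paper, which passes directly through the constrained suprema, but the content is identical; your remark about restricting the test class to $C_b(\mathbb{R}_+^\ast)$ is a reasonable extra caution at the same level of rigour as the paper's terse argument.
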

\begin{proof}
Recall \eqref{e:relent}
Suppose now that $\psi(e^h)=a>0$ and set $h^a:=h-\log a$. Then
the quantity
\begin{equation*}
\sup_{h:\psi(e^h)=a}(\pi(h/\tau)-\pi(1/\tau)\log\psi(e^h))
\end{equation*}
does not depend on $a>0$ and thus
\begin{equation*}
\sup_{h:\psi(e^h)=a}(\pi(h/\tau)-\pi(1/\tau)\log\psi(e^h))=\sup_{h:\psi(e^h)=1}\pi(h/\tau)=\pi(1/\tau)
\, \H(\tilde{\pi} \, | \, \psi)
\end{equation*}
where all suprema are taken over $h\in C_b(\mathbb{R}_+^\ast)$.
\end{proof}

Given $c,\,M \in C(E)$ such that $ 0 \le c(x) \le \xi(x)$, with $c(x)<\xi(x)$ for all $x$ such that $\xi(x)>0$, and given  $\varphi \in C_{\mathrm{b}}(E \times [0,+\infty])$,  $(x,\tau) \mapsto \varphi_x(\tau)$, define  $h^{\varphi,c,M} \colon E\times [0,+\infty]\mapsto [0,+\infty[$ as
\begin{equation}\label{h}
h_x^{\varphi,c,M}(\tau) = \frac{\varphi_x(\tau)}\tau + c(x) \, \un{]M(x),+\infty]}(\tau), \quad x\in E, \ \tau\in \, ]0,+\infty],
\end{equation}
Notice that $h_x^{\varphi,c,M}$ is l.s.c. on $]0,+\infty]$.
Then define
\begin{equation}\label{Gamma}
\begin{split}
\Gamma :=\Big\{ (h,H) \, : \: &  h=h^{\varphi,c,M}\,\text{for $c,\,M,\,\varphi$ as above and s.t.\
$\psi_{x} \left(e^{\tau h_x} \right)< $1 }
\\ & H\colon E\times E\mapsto\R \,\,\text{such that} \sum_{z\in E} p_{x,z} \, e^{H(x,z)} < 1, \ \forall \, x\in E\Big\}.
\end{split}
\end{equation}

For all $(h,H)\in \Gamma$ we denote by
$I_{h,H}: \Lambda\mapsto \mathbb{R}$ the functional
\begin{equation}\label{def:I_F varphi}
\begin{split}
& \,I_{h,H}(\mu,Q):= \sum_{x,y\in E}  Q(x,y) \left( H(x,y) - \log\sum_z q_{x,z} \, e^{H(x,z)}\right)\\
&\,+\sum_{x\in E}\left(\int_{]0,+\infty]} \mu(x,d\tau) \, h_x(\tau)-\sum_{y}Q(x,y)\log \int_{]0,+\infty[} \psi_{x}(ds) \, e^{s\, h_{x}(s)} \right).
\end{split}
\end{equation}

\begin{prop}
\label{l:3.4}
For $(\mu,Q)\in \Lambda$
\begin{equation} \label{e:muf}
  \I(\mu,Q) = \sup_{(h,H)\in\Gamma} \I_{h,H}(\mu,Q).
\end{equation}
In particular, $\I$ is convex.
\end{prop}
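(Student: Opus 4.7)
My plan is to verify both inequalities of the identity. Convexity of $\I$ then follows immediately, since $(\mu,Q)\mapsto \I_{h,H}(\mu,Q)$ is affine for each fixed $(h,H)$, and the right-hand side is a supremum of affine functionals.

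The starting observation is that $\I_{h,H}(\mu,Q)=A(Q,H)+B(\mu,Q,h)$ decomposes into pieces depending on disjoint sets of variables, and the constraints defining $\Gamma$ separate into one on $H$ alone and one on $h$ alone. Hence $\sup_{(h,H)\in\Gamma}\I_{h,H}=\sup_H A+\sup_h B$. For $\sup_H A$, grouping per $x$ gives $A=\sum_x \lambda(x)\bigl[\sum_y p^Q_{x,y}H(x,y)-\log\sum_z p_{x,z}e^{H(x,z)}\bigr]$ with $\lambda(x)=\sum_y Q(x,y)$, and the Donsker--Varadhan variational formula identifies the per-$x$ supremum with $\H(p^Q_{x,\cdot}\,|\,p_{x,\cdot})$; the strict constraint $\sum_z p_{x,z}e^{H(x,z)}<1$ is inessential because the per-$x$ functional is invariant under $H\mapsto H-\kappa$.

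For $\sup_h B$ restricted to $\Lambda_0$, I work per $x$. Using $h_x(+\infty)=c(x)$ and $\tau h_x(\tau)=\varphi_x(\tau)+c(x)\tau\un{]M(x),+\infty]}(\tau)$, separate out the $\{+\infty\}$-mass to write
\begin{equation*}
B_x=\int_{]0,+\infty[}\mu(x,d\tau)\,h_x(\tau)+c(x)\mu(x,\{+\infty\})-\lambda(x)\log\psi_x(e^{\tau h_x}).
\end{equation*}
For the upper bound, apply Lemma \ref{lem:algfact} to $\pi=\mu(x,\cdot)|_{]0,+\infty[}$ (whose integral $\pi(1/\tau)=\lambda(x)$ on $\Lambda_0$) and $\psi=\psi_x$, approximating $\tau h_x$ from below by bounded continuous functions to handle the linear growth from the $c(x)\tau\un{]M,+\infty]}$ term. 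The resulting inequality $\pi(h/\tau)-\pi(1/\tau)\log\psi(e^h)\le \pi(1/\tau)\H(\tilde\pi\,|\,\psi)$, combined with $c(x)\le\xi(x)$, yields $B_x\le \lambda(x)\H(\psi^\mu_x\,|\,\psi_x)+\xi(x)\mu(x,\{+\infty\})$. For the matching lower bound, given $\epsilon>0$ pick $\varphi^\epsilon\in C_{\mathrm{b}}$ from Lemma \ref{lem:algfact} nearly attaining $\lambda(x)\H(\psi^\mu_x\,|\,\psi_x)$ with $\psi_x(e^{\varphi^\epsilon})<1$, choose $c(x)<\xi(x)$ approaching $\xi(x)$, and take $M$ large enough to preserve $\psi_x(e^{\tau h_x^{\varphi^\epsilon,c,M}})<1$---which is possible because the tail $\int_{]M,+\infty[}\psi_x(d\tau)\,e^{\varphi^\epsilon+c\tau}$ vanishes as $M\to+\infty$ for any $c<\xi(x)$. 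Passing $M\to+\infty$, $c\uparrow\xi$, then $\epsilon\downarrow 0$ gives the lower bound.

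Off $\Lambda_0$ the sup equals $+\infty$: if $\int\mu(x,d\tau)/\tau\ne \lambda(x)$ for some $x$, then taking $\varphi_x\equiv\kappa$ and $c=0$ yields $B_x=\kappa\bigl(\int\mu(x,d\tau)/\tau-\lambda(x)\bigr)$, which is unbounded in the direction compatible with $e^\kappa<1$; the flow-balance violation is detected analogously via appropriate $H$. The main technical obstacle I anticipate is executing the simultaneous limit $M\to+\infty$ and $c\uparrow\xi$ in the lower-bound construction while preserving the strict $\Gamma$-constraint---this is precisely the role of the strict inequality $c(x)<\xi(x)$ built into the definition of $\Gamma$.
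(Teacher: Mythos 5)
Your argument follows essentially the same route as the paper's proof: decompose $\I_{h,H}$ into an $H$-part and an $h$-part, identify their separate suprema as the Donsker--Varadhan relative entropy $\sum_y Q(x,y)\H(p^Q_{x,\cdot}\,|\,p_{x,\cdot})$ and, via Lemma \ref{lem:algfact} together with the limits $M\to+\infty$ and $c\uparrow\xi(x)$, the $\tau$-weighted entropy plus the $\xi(x)\mu(x,\{+\infty\})$ contribution, and detect $(\mu,Q)\notin\Lambda_0$ by driving a constant $\varphi$ (or a suitable $H$) to infinity. One shared gloss worth noting: your constant-$\varphi$ test with $\kappa\to-\infty$ only produces $+\infty$ when $\int\mu(x,d\tau)/\tau<\sum_y Q(x,y)$, which is exactly the case the paper treats by silently restricting to $\U\setminus\U_0$; since the proposition is stated on all of $\Lambda$, the remaining defect $\int\mu(x,d\tau)/\tau>\sum_y Q(x,y)$ would strictly need a separate (if easy) argument.
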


\begin{proof}
Let first $(\mu,Q)\in \Lambda_0$ and let us consider $(h,H)\in\Gamma$. Then, by the well known properties of the relative entropy and since $\xi_x\geq c_x$, we find easily that $\I(\mu,Q)\geq \I_{h,H}(\mu,Q)$, so that by the arbitrariness of $(h,H)\in\Gamma$
\begin{equation*}
 \I(\mu,Q) \ge \sup_{(h,H)\in\Gamma} \I_{h,H}(\mu,Q).
\end{equation*}
Now, let us prove the converse inequality. We have for all $(h,H)\in\Gamma$ with $h=h^{\varphi,c,M}$
\begin{equation*}
\begin{split}
& \sup_{\varphi} \sup_{c} \limsup_{M\to+\infty} \, \langle \mu, h^{\varphi,c,M} \rangle 
\\ &= \sup_{\varphi} \sum_x \left(\mu(x,1/\tau) \, \tilde\mu(x,\varphi_x-\log \psi_x(e^{\varphi_x})) + \mu(x,+\infty)\, \xi_x\right) \\
&= \sup_{\varphi: \psi_x(e^{\varphi_x})=1} \sum_x \left(\mu(x,1/\tau) \, \tilde\mu(x,\varphi_x) + \mu(x,+\infty)\, \xi_x\right)\\
& = \sum_x \left(\mu(x,1/\tau) \, \H\left(\tilde\mu(x,\cdot)\, | \, \psi_x\right) +  \mu(x,+\infty)\, \xi_x\right).
\end{split}
\end{equation*}
On the other hand
\begin{equation*}
\sup_{H: \langle q_{x,\cdot},e^H\rangle<1} \, \langle Q,H\rangle  = \sum_x \Big( \sum_y Q(x,y)\Big)
\H\left( \overline Q(x,\cdot) \, | \, q_{x,\cdot} \right).
\end{equation*}
Therefore
\begin{equation*}
\sup_{(h,H)\in\Gamma} \I_{h,H}(\mu,Q)
\geq \I(\mu,Q).
\end{equation*}

Finally, let $(\mu,Q) \in \Lambda \setminus \Lambda_0$; we want then to show that $\sup_{(h,H)\in\Gamma} \I_{h,H}(\mu,Q)=+\infty$.
Since $(\mu,Q) \in \U \setminus \U_0$, then $\mu(x,1/\tau)<\sum_y Q(x,y)$ for some $x\in E$.
Then for $H(x,\cdot)\equiv 0$ and $h_x(\tau)=-M/\tau$ we have
\begin{equation*}
I_{h,H}(\mu,Q) \geq M\Big(\sum_{y}Q(x,y)-\mu(x,1/\tau) \Big) \to +\infty.
\end{equation*}
\end{proof}

We turn now to show the proof of Proposition \ref{prop:bon fonction de I}. 
\begin{proof}[Proof of Proposition \ref{prop:bon fonction de I}]
Let $(\mu_n,Q_n)\subset \mathcal{Y}$ be a sequence of measures such that
\begin{equation}\label{eq:I fini}
    \varlimsup_{n\rightarrow+\infty}\mathrm{I}(\mu_n,Q_n)<+\infty
\end{equation}
We need to prove that $(\mu_n,Q_n)_n$ is precompact (coercivity of $\mathrm{I}$) and that for any limit point $(\mu,Q)$ of $(\mu_n,Q_n)$, $\liminf_{n\rightarrow+\infty}\mathrm{I}(\mu_n,Q_n)\geq\mathrm{I}(\mu,Q)$ (lower semi-continuity of $\mathrm{I}$). Notice that (\ref{eq:I fini}) implies $(\mu_n,Q_n)\subset \U_0$ for $n$ large enough, i.e.
\begin{equation}\label{eq:relation mu Q}
\mu_n(x,1/\tau) = \sum_{y\in E}Q_n(x,y), \quad \sum_{y\in E}(Q_n(x,y)-Q_n(y,x))=0, \qquad \forall \, x\in E.
\end{equation}

\noindent\textit{Coercivity of $\mathrm{I}$}. By the bound (\ref{eq:I fini})
\begin{equation}\label{eq:a,b}
\varlimsup_{n\rightarrow +\infty}
\mu_n(x,1/\tau) \, \H(\tilde\mu_n(x,\cdot) \, | \, \psi_x) <+\infty.
\end{equation}
From any subsequence, we can extract a sub-subsequence along which
\begin{equation*}
\H(\tilde\mu_n(x,\cdot) \, | \, \psi_x)\to b, \qquad \mu_n(x,1/\tau)\to a,
\end{equation*}
with $a,b\in[0,+\infty]$. Let us set for simplicity $\phi:=\psi_x$ and $\pi_n:=\mu_n(x,\cdot)$. If $b=+\infty$, then $a=0$, thus let us suppose that $b<+\infty$; then by the inequality $t\log t\geq -e^{-1}$ and by
Jensen's inequality we get
\begin{equation}\label{entrine}
\frac{1}{e}\phi(\R_+^*\setminus E)+\H\big(\tilde\pi_n\, \big|\, \phi\big)\geq
\int_E f_n\ln f_n\,d\phi\geq\tilde\pi_n(E)\log\frac{\tilde\pi_n(E)}{\phi(E)},
\end{equation}
for any Borel $E\subset \R$, where $\tilde\pi_n=f_n\phi$.
Choosing $E=]0,\eps[$ with $0<\eps <+\infty$, we find that
$\sup_n\tilde\pi_n(E)\to 0$ as $\eps\to 0$. Therefore
$(\tilde\pi_n)_n$ is tight in $\cP(]0,+\infty])$. Therefore, up to passing to a further subsequence,
$\tilde\pi_n\rightharpoonup \tilde\pi\in \cP(]0,+\infty])$. By lower semicontinuity of the relative entropy $\H\big(\cdot\, \big|\, \phi\big)$, we obtain that $\H\big(\tilde\pi\, \big|\, \phi\big)<+\infty$
and therefore $\tilde\pi(\{+\infty\})=\phi(\{+\infty\})=0$.
We claim also that $\pi_n(1/\tau)\to\pi(1/\tau)<+\infty$. Notice that
\begin{equation*}
\int_{]0,\eps[}\frac1\tau\, \pi_n(d\tau) = \pi_n(1/\tau)\, \tilde\pi_n(]0,\eps[)
\end{equation*}
and, by boundedness of $(\pi_n(1/\tau))_n$ and tightness of $(\tilde\pi_n)_n$
we obtain that
\begin{equation*}
\lim_{\eps\to 0} \sup_n \int_{]0,\eps[}\frac1\tau\, \pi_n(d\tau) = 0.
\end{equation*}
Therefore, by uniform integrability, we obtain that
\begin{equation*}
\pi(1/\tau)=\lim_n \pi_n(1/\tau)<+\infty.
\end{equation*}

It follows that necessarily $a<+\infty$. Let us denote
\begin{equation*}
K_{M}:=\left \{(\mu,Q)\in \Lambda \, :\,
 \mu(1/\tau)\leq \sum_{x,y}Q(x,y)\leq M \right\}
\end{equation*}
Since $(\mu_n,Q_n)\in\U_0$, then $\sup_n Q_n(x,E) = \sup_n \mu(x,1/\tau)<+\infty$, then $(\mu_n,Q_n)\in K_M$ for $M$ large enough. It is not difficult to verify the compactness of $K_M$ for all $M<+\infty$, therefore we can conclude compactness of $(\mu_n,Q_n)_n$ in $\mathcal{Y}$.

\medskip
\noindent\textit{Semi-continuity of $\mathrm{I}$}. Let $(\mu_n,Q_n)_n\subset \Lambda$ be such that $(\mu_n,Q_n)\to(\mu,Q)$ in $\Lambda$. We want to show that
\begin{equation*}
\I(\mu,Q) \leq \varliminf_n \I(\mu_n,Q_n).
\end{equation*}
Since $\U$ is closed in $\Lambda$ and $\I\equiv+\infty$ on $\Lambda\setminus\U$, we can suppose that $(\mu,Q)\in\U$.


Let us consider $(h,H)\in\Gamma$, see \eqref{Gamma}. Since $h$ is s.l.c. and $\U$ is closed in $\Lambda$, then $\I_{h,H}$ is also l.s.c. on $\Lambda$. Then by \eqref{e:muf} we obtain
\begin{equation*}
\I_{h,H}(\mu,Q)\leq \varliminf_n \I_{h,H}(\mu_n,Q_n) \leq \varliminf_n \I(\mu_n,Q_n)
\end{equation*}
and by the arbitrariness of $(h,H)$
\begin{equation*}
\I(\mu,Q) = \sup_{(h,H)\in\Gamma} \I_{h,H}(\mu,Q)\leq \varliminf_n \I(\mu_n,Q_n).
\end{equation*}
\end{proof}

\begin{lemma}\label{lem:omega dense}
Let $\U$ be defined by \eqref{def:U}, and let
\begin{equation}\label{U00}
\begin{split}
\U_{00}:= \{  & \ (\mu,Q)\in\U :  \I(\mu,Q)<+\infty, \\ & \ \mu(x,+\infty) = 0 \ {\rm and} \ \mu(x,]0,+\infty[)>0, \ \forall \, x\in E,
\\ & \ (\overline Q(x,y))_{x,y\in E} \ {\rm defines \ an \ irreducible \ transition \ matrix \ on  \ } E \},
\end{split}
\end{equation}
where $\overline Q$ is defined as in \eqref{overlineQ}.

Then $\U_{00}$ is $\I$-dense in $\U$, namely for all $(\mu,Q)\in \U$ with $\I(\mu,Q)<+\infty$, there exists a sequence $(\mu_n,Q_n)_n$ in $\U_{00}$ such that 
\begin{equation*}
(\mu_n,Q_n)\to (\mu,Q), \qquad \lim_n\mathrm{I}(\mu_n,Q_n)= \I(\mu,Q).
\end{equation*}
\end{lemma}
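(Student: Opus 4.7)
The plan is to approximate $(\mu,Q)\in\U$ with $\I(\mu,Q)<+\infty$ by elements of $\U_{00}$ in two stages: first, redistribute the atom of $\mu$ at $+\infty$ (if any) onto $]0,+\infty[$ while correctly reproducing the $\xi(x)\,\mu(x,\{+\infty\})$ contribution to $\I$; second, form a convex combination with a fixed reference pair $(\hat\mu,\hat Q)\in\U_{00}$ so as to enforce the irreducibility of $\overline Q$ and the strict positivity of $\mu(x,\,]0,+\infty[)$ for every $x\in E$.

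For the reference, let $\pi$ denote the invariant probability of $p$, pick $r>0$ with $\psi_x(]0,r])>0$ for all $x\in E$, and set $\bar c(x):=\int\tau\,\psi_x(d\tau\,|\,]0,r])$, $Z:=\sum_x\pi(x)\,\bar c(x)$,
\[
\hat Q(x,y):=Z^{-1}\pi(x)\,p_{x,y},\qquad \hat\mu(x,d\tau):=Z^{-1}\pi(x)\,\tau\,\un{]0,r]}(\tau)\,\psi_x(d\tau\,|\,]0,r]).
\]
Stationarity of $\pi$ gives $(\hat\mu,\hat Q)\in\U_0$, and a direct check yields $p^{\hat Q}=p$, $\psi^{\hat\mu}_x=\psi_x(\cdot\,|\,]0,r])$, $\hat\mu(x,\{+\infty\})=0$, $\hat\mu(x,\,]0,+\infty[)>0$, and $\I(\hat\mu,\hat Q)=-Z^{-1}\sum_x\pi(x)\log\psi_x(]0,r])<+\infty$, hence $(\hat\mu,\hat Q)\in\U_{00}$.

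Write $\alpha(x):=\mu(x,\{+\infty\})$; the assumption $\I(\mu,Q)<+\infty$ forces $\alpha(x)>0\Rightarrow\xi(x)<+\infty$. Using the identity $\xi(x)=\liminf_{T\to+\infty}\bigl(-T^{-1}\log\psi_x([T,+\infty[)\bigr)$, for each such $x$ pick $T_n\uparrow+\infty$ realising the liminf, let $\phi_x^{(n)}:=\psi_x(\cdot\,|\,[T_n,+\infty[)$ and $b_n(x):=\int(1/\tau)\,\phi_x^{(n)}(d\tau)$, and then set
\[
\mu^{(n)}(x,d\tau):=\mu(x,d\tau)\big|_{\tau<+\infty}+\alpha(x)\,\phi_x^{(n)}(d\tau),\qquad Q^{(n)}:=Q+R^{(n)},
\]
where $R^{(n)}\ge 0$ is a circulation on $E\times E$ supported in $\{(x,y):p_{x,y}>0\}$ with row and column sums at $x$ both equal to $\alpha(x)\,b_n(x)$ (such a circulation exists by the irreducibility of $p$). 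Then $(\mu^{(n)},Q^{(n)})\in\U_0$, $\mu^{(n)}(x,\{+\infty\})=0$, and $(\mu^{(n)},Q^{(n)})\to(\mu,Q)$ in $\Lambda$. Applying Lemma~\ref{lem:algfact} separately to $\mu(x,\cdot)|_{\tau<+\infty}$ and to $\phi_x^{(n)}$, and using sub-additivity of the supremum, one obtains
\[
\mu^{(n)}(x,1/\tau)\,\H\!\left(\psi^{\mu^{(n)}}_x\,\big|\,\psi_x\right)\le \mu(x,1/\tau)\,\H\!\left(\psi^\mu_x\,\big|\,\psi_x\right)+\alpha(x)\,b_n(x)\,\H\!\left(\tilde\phi_x^{(n)}\,\big|\,\psi_x\right),
\]
where $\tilde\phi_x^{(n)}(d\tau):=(b_n(x)\,\tau)^{-1}\phi_x^{(n)}(d\tau)$. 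A direct calculation gives $b_n(x)\to 0$ and $b_n(x)\,\H(\tilde\phi_x^{(n)}|\psi_x)\to\xi(x)$, which combined with the continuity of the transition-kernel entropy yields $\limsup_n\I(\mu^{(n)},Q^{(n)})\le\I(\mu,Q)$; the matching liminf follows from the lower semi-continuity of $\I$ (Proposition~\ref{prop:bon fonction de I}).

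Finally, for $\eta_n\downarrow 0$ let $(\mu_n,Q_n):=(1-\eta_n)(\mu^{(n)},Q^{(n)})+\eta_n(\hat\mu,\hat Q)$. Each $(\mu_n,Q_n)$ lies in $\U_{00}$: no summand has mass at $+\infty$, $\mu_n(x,\,]0,+\infty[)\ge\eta_n\,\hat\mu(x,\,]0,+\infty[)>0$, and $\overline{Q_n}$ inherits the support (hence the irreducibility) of $p$ from $\hat Q$. The convexity of $\I$ (Proposition~\ref{l:3.4}) yields
\[
\I(\mu_n,Q_n)\le(1-\eta_n)\,\I(\mu^{(n)},Q^{(n)})+\eta_n\,\I(\hat\mu,\hat Q)\xrightarrow[n\to+\infty]{}\I(\mu,Q),
\]
while the matching liminf is again Proposition~\ref{prop:bon fonction de I}. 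I expect the main obstacle to be the scaling $b_n(x)\,\H(\tilde\phi_x^{(n)}|\psi_x)\to\xi(x)$: writing $\H(\tilde\phi_x^{(n)}|\psi_x)=-\int\log\tau\,d\tilde\phi_x^{(n)}-\log b_n(x)-\log\psi_x([T_n,+\infty[)$, one must verify that $b_n(x)$ multiplied by each of these three logarithmic terms has the correct limiting behaviour -- this is where the careful choice of the subsequence $T_n$ is essential, and it is the mechanism by which the $\xi(x)\,\alpha(x)$ term in $\I$ is recovered from a finite-mass approximation.
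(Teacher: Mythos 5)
Your overall strategy (truncate the tail to absorb the atom of $\mu(x,\cdot)$ at $+\infty$ into $]0,+\infty[$, then convex-combine with a fixed reference element of $\U_{00}$ to secure irreducibility and positivity) is the same two stages as the paper's proof, but your mechanism for removing the atom at $+\infty$ has a genuine gap. After replacing $\alpha(x)\,\delta_{+\infty}$ by $\alpha(x)\,\phi_x^{(n)}$ you change $\mu(x,1/\tau)$ by $\alpha(x)\,b_n(x)$, which forces you to change $Q$; you then postulate a nonnegative circulation $R^{(n)}$ supported on $\{p>0\}$ whose row and column sums at every node $x$ equal $d(x):=\alpha(x)\,b_n(x)$. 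Such a circulation does not exist in general: on $E=\{1,2\}$ with $p_{1,2}=p_{2,1}=1$ (no self-loops), every nonnegative circulation $R$ on $\{p>0\}$ satisfies $R(1,2)=R(2,1)$ and hence has equal throughputs at the two nodes, whereas $\alpha(1)\,b_n(1)\neq\alpha(2)\,b_n(2)$ can certainly occur with $\I(\mu,Q)<+\infty$ (e.g.\ $\alpha(2)=0<\alpha(1)$). So step one breaks down. The paper avoids this by \emph{not} touching $Q$ at all: it rescales both the finite-$\tau$ density and the tail-truncated replacement by constants $\alpha_n,\beta_n$ chosen so that $\mu_n(x,1/\tau)$ is exactly preserved (hence $Q_n=Q$ works); this requires $\mu(x,]0,+\infty[)>0$, which is why the paper performs the convex-combination step \emph{first} (passing to $\U_1$) and removes the atom at $+\infty$ second. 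Reversing your two stages and adopting the paper's rescaling would repair the argument, but it then becomes the same proof.

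On the point you flag as the "main obstacle": you are right that $b_n(x)\,\H(\tilde\phi_x^{(n)}\,|\,\psi_x)$ need not converge to $\xi(x)$ (the middle factor $b_n(x)T_n$ can have $\liminf<1$ for irregular tails), but this is not actually an obstacle. What is needed is only $\limsup_n b_n(x)\,\H(\tilde\phi_x^{(n)}\,|\,\psi_x)\le\xi(x)$. In your decomposition the first term $-b_n(x)\int\log\tau\,d\tilde\phi_x^{(n)}$ is $\le 0$ eventually, $-b_n(x)\log b_n(x)\to 0$, and $-b_n(x)\log\psi_x([T_n,+\infty[)\le -T_n^{-1}\log\psi_x([T_n,+\infty[)\to\xi(x)$ by the choice of $T_n$ and $b_n(x)\le 1/T_n$; the matching lower bound on $\I$ then comes for free from lower semi-continuity. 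So that step is fine as a $\limsup$ even without the exact limit.
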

\begin{proof}
Let us start by proving that the following set 
\begin{equation*}
\begin{split}
\U_1:= \{  & \ (\mu,Q)\in\U :  \I(\mu,Q)<+\infty, \ \mu(x,]0,+\infty[)>0, \ \forall \, x\in E,
\\ & \ (\overline Q(x,y))_{x,y\in E} \ {\rm defines \ an \ irreducible \ transition \ matrix \ on  \ } E \},
\end{split} 
\end{equation*}
is $\I$-dense in $\U$.
For any $x\in E$, let $A_x$ be a bounded Borel subset of $]0,+\infty[$ such that $\psi_x(A_x)>0$. We can set
\begin{equation*}
\mu^0(x,d\tau) := \frac{\nu_x \, \tau \, \psi_x(d\tau \,|\, A_x)}{Z}, \quad Z:= \sum_{y\in E} \nu_y \int  \tau \, \psi_y(d\tau \,|\, A_y), \quad Q^0(x,y):=\frac{\nu_x\, q_{x,y}}{Z},
\end{equation*}
where $Z<+\infty$ since every $A_y$ is assumed to be bounded. Since $(q_{xy})_{x,y\in E}$ is assumed to be irreducible, $Q^0$ is so as well; at the same time, it is easy to see that $\I(\mu^0,Q^0)<+\infty$ and therefore
$(\mu^0,Q^0)\in\U_1$. We note in particular that $\mu^0(x,\cdot)\ll \psi_x(\cdot)$ and $Q^0(x,\cdot)\ll q_{x\cdot}$ for all $x\in E$. Now for
any $(\mu,Q)\in\U$ with $\I(\mu,Q)<+\infty$, we set
\begin{equation*}
\mu^\eps := \eps\, \mu \, + (1-\eps)\, \mu^0, \qquad
Q^\eps := \eps\, Q+ (1-\eps)\, Q^0, \qquad \eps\in[0,1].
\end{equation*}
Then $(\mu^\eps,Q^\eps)\in\U_0$ and by convexity
\begin{equation*}
\I(\mu^\eps,Q^\eps) \leq \eps\I(\mu,Q)+(1-\eps)\I(\mu^0,Q^0)
\end{equation*}
and we obtain that $(\mu^\eps,Q^\eps)\to(\mu,Q)$ and $\I(\mu^\eps,Q^\eps)\to\I(\mu,Q)$ as $\eps\to 1$.

Let us now show that $\U_{00}$ is $\I$-dense in $\U_1$.
We note first that
\begin{equation}\label{liminf}
\varlimsup_M \frac1M \log \psi_x([M,+\infty[) = -\xi_x, \qquad \forall \, x\in E.
\end{equation}
Indeed, by the exponential Markov inequality, for all $0\leq c<\xi$
(we drop the subscript $x$ for simplicity of notation)
\begin{equation*}
\psi([M,+\infty[) \leq \psi(e^{c(\tau-M)}), \qquad \forall \, M>0,
\end{equation*}
so that, by taking the supremum over $c<\xi$.
\begin{equation*}
\varlimsup_M \frac1M \log \psi([M,+\infty[) \leq -\xi.
\end{equation*}
Now, if $\xi<+\infty$, suppose that the inequality is strict, i.e. there exists $c>\xi$ and $M_c$ large enough such that 
\begin{equation*}
\psi([M,+\infty[) \leq e^{-cM}, \qquad \forall \, M\geq M_c.
\end{equation*}
Then we have for all $c'<c$
\begin{equation*}
\begin{split}
\psi(e^{c'\tau}) & = \int_0^{+\infty} \psi(e^{c'\tau}>t) \, dt   \leq 1+ c'\int_0^\infty
\psi(\tau>s) \, e^{c's} \, ds \\ &
\leq 1+e^{c'M_c}+\int_{M_c}^{+\infty} c'e^{-(c-c')s} \, ds <+\infty.
\end{split}
\end{equation*}
In this way, the definition of $\xi$ is contradicted and we have proved \eqref{liminf}. Therefore there exists a sequence $(M_n(x))_{n\in\N}$ such that $M_n(x)\to+\infty$ and 
\begin{equation*}
\lim_n \frac1{M_n(x)} \log \psi_{x}([M_n(x),+\infty[) = -\xi_{x},
\end{equation*}

Let us now fix $(\mu,Q)\in \U_1$ and $x\in E$. If $\mu(x,+\infty)=0$, then $\mu_n(x,\cdot):=\mu(x,\cdot)$,
$Q_n(x,\cdot):=Q(x,\cdot)$. If $\mu(x,+\infty)>0$, then, since $\I(\mu,Q)<+\infty$, 
necessarily $\xi_{x}<+\infty$ and therefore $\psi_{x}([M,+\infty[)>0$ for all $M>0$.
Moreover, since $(\mu,Q)\in \U_1$ we have $\mu(x,1\tau)>0$ and finite; finally, $\mu(x,\cdot)$ must be of the form
\begin{equation*}
\mu(x,d\tau) = \rho(\tau) \, \psi_x(d\tau) + a\, \delta_{+\infty}(d\tau).
\end{equation*}
Now, denoting $I_n:=[M_n(x),+\infty[$,
we set for $\alpha_n,\beta_n\geq 0$
\begin{equation*}
\mu_n(x,\cdot) := \left(\alpha_n \,  \rho(\tau) \, + \beta_n \, a \, \frac{\un{I_n}}{\psi_x(I_n)}
\right)\psi_x(d\tau).
\end{equation*}
Now we want to fix $\alpha_n,\beta_n$ such that
\begin{equation*}
\mu_n(x,]0,+\infty[) = \mu(x,]0,+\infty]), \qquad \mu_n(x,1/\tau) = \mu(x,1/\tau),
\end{equation*}
i.e. 
$\alpha_n \,  \mu(x,]0,+\infty[) + \beta_n \, a = \mu(x,]0,+\infty[) + a$ and
\begin{equation*}
\alpha_n \,  \mu(x,1/\tau) + \beta_n \, a \, \psi_x(1/\tau \, | \, I_n)
= \mu(x,1/\tau).
\end{equation*}
This linear system has a unique solution $(\alpha_n,\beta_n)$ if 
\begin{equation*}
\mu(x,]0,+\infty[) \, \psi_x(1/\tau \, | \, I_n) -  \mu(x,1/\tau) \ne 0,
\end{equation*}
which is true for $n$ large enough, since $\psi_x(1/\tau \, | \, [M,+\infty[)\to 0$ as $M\to+\infty$.
For the same reason, $\alpha_n\to 1$ and
$\beta_n\to 1$. In particular, this shows that $\mu_n\rightharpoonup\mu$ on $E\times ]0,+\infty]$ as $n\to+\infty$. Setting $Q_n:=Q$,
since $\mu_n(x,1/\tau) = \mu(x,1/\tau)$ for all $x\in E$ by construction, we have that $(\mu_n,Q_n)\in\U_{00}$.

Notice now that, since $\mu_n(x,]0,+\infty[) = \mu(x,]0,+\infty])$,
\begin{equation*}
\tilde\mu_n(x,\cdot) = \alpha_n \, \tilde\mu(x,\cdot) + (1-\alpha_n) \, \nu_n(\cdot),
\end{equation*}
where 
\begin{equation*}
\nu_n(d\tau) = \frac1\tau \, \frac{\un{I_n}}{\psi_x(1/\tau; I_n)} \, \psi_x(d\tau).
\end{equation*}
Now, by the convexity of $\H(\cdot \, | \, \psi_x)$, 
\begin{equation*}
\H(\tilde\mu_n(x,\cdot) \, | \, \psi_{x}) \leq \alpha_n \, \H(\tilde\mu(x,\cdot)\, | \, \psi_{x}) +(1-\alpha_n) \, \H(\tilde\nu_n \, | \, \psi_{x}).
\end{equation*}
Now, recalling that $I_n=[M_n(x),+\infty[$,
\begin{equation*}
\begin{split}
(1-\alpha_n) \, \H(\tilde\nu_n \, | \, \psi_{x}) & =  \beta_n a \, \frac{\psi_{x}( 1/\tau \,| \, I_n)}{\mu(x,1/\tau)}
\, \log\left(\frac1{\psi_{x}( I_n)}\right)
\\ & \leq \frac{\beta_n a}{\mu(x,1/\tau)} \, \frac1{M_n(x)} \, \log\left(\frac1{\psi_{x}( I_n)}\right)
\to \frac{a}{\mu(x,1/\tau)} \, \xi_x, 
\end{split}
\end{equation*}
as $n\to+\infty$. Then, it follows easily that
\begin{equation*}
\begin{split}
\limsup_n \I(\mu_n,Q_n) & \leq  \I(\mu,Q).
\end{split}
\end{equation*}
Since $\I$ is lower semi-continuous, the proof is complete.
\end{proof}

\section{Upper bound}\label{sec:upbound}

We define
\[
\begin{split}
& \U:= \left\{  (\mu,Q)\in \Lambda  :  
\mu(x,1/\tau) \leq \sum_y Q(x,y), \, \sum_{z}(Q(x,z)-Q(z,x))=0, \, \forall \, x\in E\right\}
\\ & \U_0:= \left\{(\mu,Q)\in \U  \ : \  \mu(x,1/\tau) = \sum_{y\in E}Q(x,y), \, \forall x\in E \right\}.
\end{split}
\]
\begin{lemma}
\label{closed}
Let $\mc O \subset \Lambda$ be open and $\mc O \supset \Lambda_0$. Then
\begin{equation}
\label{eq:expon}
 \limsup_{t\to +\infty}\tfrac{1}{t}\log\mathbb{P} \big( (\mu_t,Q_t) \notin \mc O \, \big)=-\infty.
\end{equation}
\end{lemma}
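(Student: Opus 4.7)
The proof rests on a \emph{deterministic} identity. Since $S_{N_t}\le t<S_{N_t+1}$, direct computation from \eqref{def:emp mes} and \eqref{def:empflow} gives, for every $\omega$ and every $x\in E$,
\begin{equation*}
\sum_y Q_t(x,y) - \mu_t(x,1/\tau) \;=\; \frac{\un{(X_{N_t}=x)}\,(S_{N_t+1}-t)}{t\,\tau_{N_t+1}} \;\in\; [0,1/t],
\end{equation*}
\begin{equation*}
\Big|\sum_y Q_t(x,y)-\sum_y Q_t(y,x)\Big| \;=\; \frac{|\un{(X_0=x)}-\un{(X_{N_t+1}=x)}|}{t} \;\leq\; 1/t.
\end{equation*}
So the constraints defining $\Lambda_0$ are satisfied up to an error $O(1/t)$ on every sample path. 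Because $\tau\mapsto 1/\tau$ is only lower semicontinuous on $[0,+\infty]$, a small pointwise defect of this form does not automatically translate into narrow closeness to $\Lambda_0$: mass of $\mu_t$ accumulating near $\tau=0$ could keep $(\mu_t,Q_t)$ away from $\Lambda_0$. The probabilistic content of the lemma is to rule out such accumulation on a super-exponentially large event.

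The next step is to establish super-exponential tightness together with super-exponential uniform integrability of $1/\tau$ under $\mu_t$. For the mass of $Q_t$, $\sum_{x,y}Q_t(x,y)=(N_t+1)/t$; since $\psi_x(\{0\})=0$, one has $\alpha_\lambda:=\max_x\psi_x(e^{-\lambda\tau})<1$ for every $\lambda>0$, and the exponential Chebyshev bound applied to $S_n$ yields $\bbP(N_t>Kt)\leq e^{t(\lambda+K\log\alpha_\lambda)}$, which can be made $\leq e^{-tM}$ for any prescribed $M$ by choosing $K$ large. For the mass of $1/\tau$ near $0$, $\mu_t(x,1/\tau;\,]0,\epsilon[)\leq t^{-1}|\{k\leq N_t:\tau_k<\epsilon\}| + O(1/t)$; conditionally on $(X_k)$, the indicators $\un{(\tau_k<\epsilon)}$ are independent Bernoulli variables with parameter at most $p_\epsilon:=\max_x\psi_x(]0,\epsilon[)\to 0$, so Chernoff for their sums combined with the bound on $N_t$ gives: for every $M,\eta>0$ there is $\epsilon>0$ with $\bbP(\mu_t(x,1/\tau;\,]0,\epsilon[)>\eta)\leq e^{-tM}$. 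Choosing $\epsilon_m\downarrow 0$ so that the Chernoff rate for $\{\mu_t(x,1/\tau;]0,\epsilon_m[)>1/m\}$ is at least $mM$, and taking a union bound over $m$, one obtains, for each $M>0$, a set
\begin{equation*}
\mathcal K_M:=\Big\{(\mu,Q)\in\Lambda:\,\textstyle\sum_{x,y}Q(x,y)\leq K,\ \mu(x,1/\tau;\,]0,\epsilon_m[)\leq 1/m,\ \forall\,x\in E,\,m\in\bbN\Big\}
\end{equation*}
satisfying $\bbP((\mu_t,Q_t)\notin\mathcal K_M)\leq e^{-tM}$.

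The set $\mathcal K_M$ is closed in $\Lambda$ (each defining constraint is a sub-level set of a lower semicontinuous function) and contained in the compact $\mc P(E\times[0,+\infty])\times\{Q\leq K\}$, hence compact. Moreover, on $\mathcal K_M$, $1/\tau$ is uniformly integrable for $\mu$, so $\mu\mapsto\mu(x,1/\tau)$ is the uniform limit, as $\epsilon\to 0$, of the bounded continuous function $\mu\mapsto\mu(x,\,1/\tau\wedge 1/\epsilon)$, and therefore continuous on $\mathcal K_M$. It follows that
\begin{equation*}
F(\mu,Q):=\sum_x\Big|\textstyle\sum_y Q(x,y)-\mu(x,1/\tau)\Big|+\sum_x\Big|\sum_y[Q(x,y)-Q(y,x)]\Big|
\end{equation*}
is continuous and nonnegative on $\mathcal K_M$, vanishing exactly on $\mathcal K_M\cap\Lambda_0$. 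For any open $\mc O\supset\Lambda_0$, the compact set $\mathcal K_M\cap\mc O^c$ is disjoint from $\Lambda_0$, so $\delta_M:=\inf_{\mathcal K_M\cap\mc O^c}F>0$ is attained and strictly positive. But the deterministic identities of the first step give $F(\mu_t,Q_t)\leq 2|E|/t$, so for $t$ large $(\mu_t,Q_t)\in\mc O^c$ forces $(\mu_t,Q_t)\notin\mathcal K_M$; hence $\bbP((\mu_t,Q_t)\notin\mc O)\leq\bbP((\mu_t,Q_t)\notin\mathcal K_M)\leq e^{-tM}$, and sending $M\to+\infty$ proves \eqref{eq:expon}.

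The main technical difficulty is the super-exponential uniform integrability in the second step: one must couple the choice of $\epsilon_m\downarrow 0$ with Chernoff rates that grow fast enough so that the union bound over $m$ still produces an $e^{-tM}$ tail. This is a slightly intricate but standard exponential-moment bookkeeping, made possible precisely by the fact that $p_\epsilon\to 0$ as $\epsilon\to 0$, which is the only substantive input from the hypothesis $\psi_x(\{0\})=0$.
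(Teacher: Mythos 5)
Your proof is correct, and it is substantially more careful than the paper's. The paper's proof of this lemma establishes exactly your two deterministic identities (that $\sum_y Q_t(x,y)-\mu_t(x,1/\tau)\in[0,1/t]$ and $|\sum_y(Q_t(x,y)-Q_t(y,x))|\le 1/t$), notes that $\U$ is closed by Fatou, and then concludes in one line that \eqref{eq:expon} ``follows, since $(\mu_t,Q_t)\notin\V$ for $t$ large enough'' (evidently a typo). As you point out, this step does not actually follow: the constraint functionals vanishing at rate $O(1/t)$ does not place $(\mu_t,Q_t)$ in an arbitrary open $\mc O\supset\Lambda_0$, because $\mu\mapsto\mu(x,1/\tau)$ is only lower semicontinuous, so the pointwise defect can coexist with mass of $\mu_t$ accumulating near $\tau=0$ in a way that keeps $(\mu_t,Q_t)$ topologically far from $\Lambda_0$. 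The published argument, combined with the exponential tightness of Lemma~\ref{l:3.2}, would indeed suffice to control $\mathbb P((\mu_t,Q_t)\notin\mc O)$ for open $\mc O$ containing the larger closed set $\U$ (since there the relevant constraint is a one-sided inequality and lower semicontinuity cuts the right way), but it does not reach $\Lambda_0$ as stated.

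What you add is exactly what is missing: a super-exponential uniform integrability estimate for $1/\tau$ under $\mu_t$ near $\tau=0$, proved by a conditional Chernoff bound on $|\{k\le N_t:\tau_k<\epsilon\}|$ using $p_\epsilon=\max_x\psi_x(]0,\epsilon[)\to 0$ (the only place $\psi_x(\{0\})=0$ enters), coupled with the bound on $N_t/t$. This produces compacts $\mathcal K_M$ on which $\mu\mapsto\mu(x,1/\tau)$ is genuinely continuous (uniform limit of $\mu(x,1/\tau\wedge1/\epsilon_m)$), so the defect function $F$ is continuous there, vanishes exactly on $\Lambda_0$, and has a strictly positive minimum on the compact $\mathcal K_M\cap\mc O^c$; your deterministic identities give $F(\mu_t,Q_t)\le 2|E|/t$, and the conclusion follows. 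The one place where the write-up is slightly terse is the $\epsilon_m$/union-bound bookkeeping for the uniform integrability part (the Chernoff rate must be made to dominate both the $1/m$ threshold and the growing target rate $mM$, and it should be noted that $\mu\mapsto\mu(x,\tfrac1\tau\un{]0,\epsilon_m[})$ is lower semicontinuous so that $\mathcal K_M$ is closed), but this is indeed routine given $p_\epsilon\to 0$. Overall, your proof is a correct and complete argument that repairs what looks like a genuine gap in the paper.
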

\begin{proof}
$\U$ is closed by Fatou's lemma. By the definitions \eqref{def:emp mes} and
\eqref{def:empflow}, we have
\begin{equation*}
\mu_t(x,1/\tau)= \frac{1}{t} \sum_{k=1}^{N_t} \un{(X_{k-1}=x)}+\frac{t-S_{N_t}}{\tau_{N_t+1}}
\end{equation*}
\begin{equation*}
\sum_{y\in E}Q_t(x,y)=\frac{1}{t} \sum_{k=1}^{N_t+1} \un{(X_{k-1}=x)}
\end{equation*}
Namely
\begin{equation*}
0 \le \sum_{y\in E}Q_t(x,y)- \mu_t(x,1/\tau) = \frac{1}{t} \,\frac{S_{N_t+1}-t}{\tau_{N_t+1}} \le \frac 1t
\end{equation*}
Moreover
\begin{equation*}
\Big| \sum_{y\in E}(Q_t(x,y)-Q_t(y,x))\Big| = \Big| \frac{\un{(X_0=x)}-\un{
(X_{N_t+1}=x)}}t\Big| \le \frac{1}{t}.
\end{equation*}
Then \eqref{eq:expon} follows, since $(\mu_t,Q_t) \notin \V$ for $t$ large enough.
\end{proof}

\subsection{Exponential tightness}
In order to prove the upper bound we have to show the exponential tightness of the sequence $\{\mathbb{P}\circ (\mu_T,Q_T)^{-1}\}_{T>0}$. To do so, we need to show that
\begin{equation*}
\inf_{\mc K } \varlimsup_{t\to+\infty}
\frac{1}{t}  \log {\bf P}_t (\mc K^c)=-\infty.
\end{equation*}
where ${\mc K }$ varies among all compact subsets of $\Lambda$.
\begin{lemma}
\label{l:3.2}
Setting $K_{M}:=\left \{(\mu,Q)\in \Lambda \, :\,
 \mu(1/\tau)\leq \sum_{x,y}Q(x,y)\leq M \right\}$ then $K_{M}$ is compact in $\Lambda$ and
 \begin{equation*}
\lim_{M\to+\infty} \varlimsup_{t\rightarrow +\infty}\frac{1}{t}\log \mathbb{P}((\mu_t,Q_t)\in K_{M}^c) = -\infty.
\end{equation*}
\end{lemma}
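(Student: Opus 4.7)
\emph{Compactness of $K_M$.} First I would argue that $\cP(E\times[0,+\infty])$ with the narrow topology is already compact because $E\times[0,+\infty]$ is compact. Since $E$ is finite, the second factor $C(E\times E;[0,+\infty[)\cong [0,+\infty[^{E\times E}$ carries the product topology, and the constraint $\sum_{x,y}Q(x,y)\leq M$ confines $Q$ to the compact simplex-like set $\{Q\ge 0:\sum Q\le M\}$. So $\Lambda\cap\{\sum Q\le M\}$ is compact. It remains to observe that the additional inequality $\mu(1/\tau)\le\sum_{x,y}Q(x,y)$ defines a closed subset: $\mu\mapsto \mu(1/\tau)=\mu(\tau\mapsto 1/\tau)$ is lower semicontinuous on $\cP(E\times[0,+\infty])$ (since $1/\tau$ is nonnegative and l.s.c.\ on $[0,+\infty]$, with the convention $1/\infty=0$), while $Q\mapsto\sum_{x,y}Q(x,y)$ is continuous. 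Hence $K_M$ is closed in a compact set, thus compact.

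\emph{Reduction to a Cram\'er-type estimate on $S_n$.} From the computation carried out in the proof of Lemma~\ref{closed}, for every $x\in E$ and every $t>0$ one has $\mu_t(x,1/\tau)\le\sum_y Q_t(x,y)$, so the lower bound in the definition of $K_M$ is automatic. Moreover $\sum_{x,y}Q_t(x,y)=(N_t+1)/t$ by the second definition in \eqref{def:empflow}. Therefore
\begin{equation*}
\{(\mu_t,Q_t)\in K_M^c\}=\big\{\tfrac{N_t+1}{t}>M\big\}\subseteq\{N_t\ge\lceil Mt\rceil-1\}=\{S_{\lceil Mt\rceil-1}\le t\},
\end{equation*}
using $\{N_t\ge n\}=\{S_n\le t\}$. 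Thus everything reduces to an exponential upper bound on $\bbP(S_n\le t)$ when $n\sim Mt$.

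\emph{Exponential moment bound.} Set
\begin{equation*}
a(\lambda):=\max_{x\in E}\int_{]0,+\infty[}\psi_x(d\tau)\,e^{-\lambda\tau},\qquad \lambda>0.
\end{equation*}
Since $E$ is finite and each $\psi_x$ gives no mass to $\{0\}$ nor to $\{+\infty\}$, one has $\psi_x(e^{-\lambda\tau})<1$ for every $x$ and every $\lambda>0$, hence $a(\lambda)<1$. Conditioning on the skeleton $(X_k)_{k\ge 0}$ and using the conditional independence of the $\tau_i$,
\begin{equation*}
\bbE[e^{-\lambda S_n}]=\bbE\Big[\prod_{i=1}^n \psi_{X_{i-1}}(e^{-\lambda\tau})\Big]\le a(\lambda)^n.
\end{equation*}
Exponential Chebyshev yields $\bbP(S_n\le t)\le e^{\lambda t}a(\lambda)^n$, and plugging $n=\lceil Mt\rceil -1$ gives
\begin{equation*}
\limsup_{t\to+\infty}\tfrac{1}{t}\log\bbP\big((\mu_t,Q_t)\in K_M^c\big)\le \lambda+M\log a(\lambda).
\end{equation*}
For any fixed $\lambda>0$ the right-hand side tends to $-\infty$ as $M\to+\infty$, which is the desired statement.

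\emph{Main obstacle.} There is no serious obstacle: the only genuine probabilistic input is the bound $a(\lambda)<1$, which is precisely what the nondegeneracy assumption $\psi_x(\{0\})=0$ delivers (note that no integrability at infinity is needed, so heavy tails do not spoil the argument). The rest is bookkeeping between $N_t$ and $S_n$ and the easy compactness in the finite-state setting.
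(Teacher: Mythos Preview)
Your proof is correct and follows essentially the same approach as the paper: both reduce $\{(\mu_t,Q_t)\in K_M^c\}$ to $\{S_n\le t\}$ with $n\sim Mt$ via the identity $\sum_{x,y}Q_t(x,y)=(N_t+1)/t$, then apply the exponential Markov inequality using $\sup_x\psi_x(e^{-\lambda\tau})<1$ (the paper simply fixes $\lambda=1$). Your compactness argument spells out what the paper dismisses as ``standard,'' but the content is the same.
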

\begin{proof}
We recall that $\mu_t(1/\tau)\leq \sum_{x,y}Q_t(x,y)$, see the proof of Lemma \ref{closed}. Moreover
\begin{equation*}
\mu_t(1/\tau) = \frac{N_t}t + \frac1t \, \frac{t-S_{N_t}}{\tau_{N_t+1}} \leq \frac{N_t+1}t,
\end{equation*}
so that $\{\mu_t(1/\tau)>M\}\subset \{N_t+1>n\}= \{S_n\leq t\}$ for $n=\lfloor Mt\rfloor$. Then 
\begin{equation*}
\begin{split}
\{(\mu_t,Q_t)\in K^c_M\}=&\left\{\mu_t(1/\tau)>M\right\} = \{N_t+1> Mt+1\}=\{S_{\lfloor Mt+1\rfloor}\leq t\}.
\end{split}
\end{equation*}
By the Markov inequality,
\begin{equation*}
\mathbb{P}((\mu_t,Q_t)\in K_{M}^c)\leq\mathbb{P}(S_{\lfloor Mt+1\rfloor}\leq t)\leq \mathbb{E}(e^{t-S_{\lfloor Mt+1\rfloor}})
\leq e^{t+\lfloor Mt+1\rfloor\log c}
\end{equation*}
where $c:=\sup_{y\in E}\psi_y(e^{-\tau})<1$. Then we have
\begin{equation*}
\limsup_{t\rightarrow +\infty}\frac{1}{t}\log\mathbb{P}((\mu_t,Q_t)\in K_{M}^c)\leq 1+(M+1)\log c
\end{equation*}
and this tends to $-\infty$ as $M\to+\infty$. Compactness of $K_{M}$ is standard.
\end{proof}

\subsection{Change of probability}
To prove the upper bound, fix $(h,H)\in \Gamma$. We define
\begin{equation}\label{hH}
q^H(x,y) := \frac{q(x,y) \, e^{H(x,y)}}{\sum_z q(x,z) \, e^{H(x,z)}}, \qquad
\psi^h_x(d\tau) := \frac{e^{\tau \, h_{x}(\tau)} \, \psi_{x}(d\tau)}{\int e^{s \, h_{x}(s)} \, \psi_{x}(ds)}
\end{equation}
and we call $\bbP^{(h,H)}$ the law of the renewal Markov process $(X_k,\tau_{k+1})_{k\geq0}$
with transition probability $(q^H,\psi^h)$.
Then,
\begin{equation*}
\begin{split}
&\left. \frac{1}{t}\log\frac{d\mathbb{P}^{(h,H)}}{d\mathbb{P}} \right|_{\sigma( (X_k,\tau_{k+1})_{k\leq N_t+1})} \\
=&\frac{1}{t}\sum_{i=1}^{N_t+1}\log\frac{e^{H(X_{i-1},X_{i})}}{\sum_z q_{X_{i-1},z}\, e^{H(X_{i-1},z)}}\,
+\frac{1}{t}\sum_{i=1}^{N_t+1}\log\frac{e^{\tau_i \, h_{X_{i-1}
}}(\tau_i)}{\int e^{\tau_i \, h_{X_{i-1}
}}(s) \, \psi_{X_{i-1}
}(ds)}\\
=&\sum_{x,y} Q_t(x,y) \left( H(x,y) - \log\sum_z q_{x,z} e^{H(x,z)}\right)\\
&+\sum_{x}\left(\int_{]0,+\infty]} \mu_t(x,d\tau) \, h_x(\tau)-\sum_y Q_t(x,y)\log \int _{]0,+\infty[}\psi_{x}(ds) \, e^{s h_{x}(s) } \right) \\ & +\frac{\tau_{N_t+1}-t+S_{N_t}}{t} \, h_{X_{N_t}}(\tau_{N_t+1})
\\ = & \I_{h,H}(\mu_t,Q_t)+\frac{\tau_{N_t+1}-t+S_{N_t}}{t} \, h_{X_{N_t}}(\tau_{N_t+1}),
\end{split}
\end{equation*}
where $I_{h,H}$ is defined in \eqref{def:I_F varphi} above. Now, recall that $h$ takes the form
\eqref{h}. Then on the event $\{n=N_t+1,X_{n-1}=x\}$
\begin{equation*}
\frac{\tau_{N_t+1}-t+S_{N_t}}{t} h_{X_{N_t}}(\tau_{N_t+1})= \frac{\tau_n-t+S_{n-1}}t\left(\frac{\varphi_{x}(\tau_{n})}{\tau_{n}}+ c_x \un{]M_x,+\infty]}(\tau_n) \right) \geq -\frac{\|\varphi\|_\infty}t
\end{equation*}
since $0\leq\tau_n-t+S_{n-1}\leq\tau_{n}$ and $c_x\geq 0$. Therefore, for $\mc A$ measurable subset of $\Lambda$ and for $(h,H) \in \Gamma$
\begin{equation*}
\begin{split}
    \frac{1}{t} \log {\bf P}_t(\mc A) &
\leq      \frac{1}{t} \log
        \E\left(  \un {\mc A}(\mu_t,Q_t) \, e^{-t \I_{h,H}(\mu_t,Q_t)+\|\varphi\|_\infty} \, \left.\frac{d\mathbb{P}^{(h,H)}}{d\mathbb{P}} \right|_{\sigma( (X_k,\tau_{k+1})_{k\leq N_t+1})}\right)
      \\ &
\le \frac{1}{t} \log\left[   e^{ -t\inf_{(\mu,Q) \in \mc A}\I_{h,H}(\mu,Q)+\|\varphi\|_\infty} \,
\E\left( \left.\frac{d\mathbb{P}^{(h,H)}}{d\mathbb{P}} \right|_{\sigma( (X_k,\tau_{k+1})_{k\leq N_t+1})} \right) \right]\\
& = -\inf_{(\mu,Q) \in \mc A}
\I_{h,H}(\mu,Q)+\frac{\|\varphi\|_\infty}{t},
    \end{split}
\end{equation*}
and therefore
\begin{equation}\label{e:ldmeas}
      \varlimsup_{t \to + \infty} \frac{1}{t} \log \mb P_t(\mc A)
\le  -\inf_{(\mu,Q) \in \mc A} \I_{h,H}(\mu,Q).
\end{equation}
For $M>0$, $g\in C_c(]0,+\infty])$ for all $x\in E$, $G:E^2\mapsto \R$ and $\delta>0$, let
\begin{equation*}
K_{M,g,G,\delta}:= \{ (\mu,Q)\in K_{M} \, : \, \exists \, (\mu',Q')\in \U, |\mu(g)-\mu'(g)|+
|Q(G)-Q'(G)|\leq \delta\},
\end{equation*}
where $K_{M}$ is the compact set defined in Lemma \ref{l:3.2}, and
\begin{equation*}
  R_{M,g,G,\delta}:= -\varlimsup_{t \to +\infty} \frac{1}{t} \log {\bf P}_t(K_{M,g,G,\delta}^c).
\end{equation*}
Let now $\mc O$ be
an open subset of $\Lambda$. Then applying
\eqref{e:ldmeas} for $\mc A=\mc O \cap K_{M,g,G,\delta}$
  \begin{equation*}
    \begin{split}
  \varlimsup_{t\to+\infty} \frac{1}{t} \log {\bf P}_t(\mc O) &  \le
      \varlimsup_{t\to+\infty} \frac{1}{t} \log\big[2 \max({\bf P}_t
      (\mc O \cap K_{M,g,G,\delta}),{\bf P}_t(K_{M,g,G,\delta}^c) )\big]
\\ &
\le \max\left(-\inf_{(\mu,Q) \in \mc O \cap K_{M,g,G,\delta}} \I_{h,H}(\mu,Q), -R_{M,g,G,\delta}\right)
\\ & =
-\inf_{(\mu,Q) \in \mc O \cap K_{M,g,G,\delta}} \I_{h,H}(\mu,Q) \wedge R_{M,g,G,\delta}
    \end{split}
\end{equation*}
which can be restated as
\begin{equation}\label{e:ldopen}
  \varlimsup_{t\to +\infty}  \frac{1}{t} \log {\bf P}_t(\mc O) \le
- \inf_{(\mu,Q) \in \mc O} \I_{h,H,M,g,G,\delta}(\mu,Q)
\end{equation}
for any open set $\mc O$,
where the functional $\I_{h,H,M,g,G,\delta}$ is defined as
\begin{equation*}
  \I_{h,H,M,g,G,\delta}(\mu,Q):=
  \begin{cases}
    \I_{h,H}(\mu,Q) \wedge R_{M,g,G,\delta} & \text{if $(\mu,Q) \in K_{M,g,G,\delta}$}
\\ +\infty & \text{otherwise}.
  \end{cases}
\end{equation*}
Since $h$ is lsc and $K_{M,g,G,\delta}$ is closed,
then $\I_{h,H,M,g,G,\delta}$ is lsc.
By minimizing \eqref{e:ldopen} over $\{h,H,M,g,G,\delta\}$ we obtain
\begin{equation*}
\varlimsup_{t\to +\infty}  \frac{1}{t} \log {\bf P}_t(\mc O) \le
- \sup_{h,H,M,g,G,\delta}\inf_{(\mu,Q) \in \mc O} \I_{h,H,M,g,G,\delta}(\mu).
\end{equation*}
Since $\mc O$ is arbitrary, by applying the minimax lemma
\cite[Appendix 2.3, Lemma 3.3]{KL}, we get that for any compact set $\mc K$
\begin{equation*}
\varlimsup_{t\to +\infty}  \frac{1}{t} \log {\bf P}_t(\mc K) \le
- \inf_{(\mu,Q) \in \mc K} \sup_{h,H,M,g,G,\delta} \I_{h,H,M,g,G,\delta}(\mu,Q)
\end{equation*}
i.e.\ $({\bf P}_t)_{t\geq 0}$ satisfies
a large deviations upper bound on compact sets with speed $t$ and rate functional
$\tilde{\I}:\Lambda\mapsto[0,+\infty]$ given by
\begin{equation*}
\tilde{\I}(\mu,Q):=  \sup_{h,H,M,g,G,\delta} \I_{h,H,M,g,G,\delta}(\mu,Q).
\end{equation*}
By Lemma~\ref{closed} we have $\cap_{g,G,\delta}K_{M,g,G,\delta}\subset
\U$, so that $\tilde{\I}(\mu,Q)=+\infty$
if $(\mu,Q)\notin\U$. We claim now that
\begin{equation*}
\lim_{M\to +\infty} R_{M,g,G,\delta}=+\infty, \qquad \forall \, g,G,\delta.
\end{equation*}
Indeed, by the definition we see that $K_M\cap \U\subset K_{M,g,G,\delta}$. Therefore ${\bf P}_t(K_{M,g,G,\delta}^c)\leq 2\max\{{\bf P}_t(K_{M}^c), {\bf P}_t(\U^c)\}$, and we conclude using \eqref{eq:expon} first and then Lemma~\ref{l:3.2}.
Therefore for all $(\mu,Q)\in\Lambda$
\begin{equation*}
\tilde{\I}(\mu,Q)\ge  \sup_{(h,H)\in\Gamma} \, \tilde\I_{h,H}(\mu,Q),
\end{equation*}
where
\begin{equation*}
  \tilde\I_{h,H}(\mu,Q):=
  \begin{cases}
    \I_{h,H}(\mu,Q) & \text{if $(\mu,Q) \in \U$},
\\ \\
+\infty & \text{otherwise}.
  \end{cases}
\end{equation*}
Thus $\tilde{\I}(\mu,Q) \ge \I(\mu,Q)$ by Proposition~\ref{l:3.4}. Therefore
$({\bf P}_t)_{t\geq 0}$ satisfies a large deviations upper bound with rate $\I$ on
compact sets. By Lemma~\ref{l:3.2} and \cite[Lemma 1.2.18]{Zeitouni},
$({\bf P}_t)_{t\geq 0}$ satisfies the full large deviations upper bound on closed sets.

\section{Laws of large numbers}\label{lln}
We prove now an auxiliary law of large numbers to be used in the proof of the lower bound. Hereafter we make the dependence on the distribution of $X_0$ explicit, writing $\E_\nu$ if $\nu$ is the law of $X_0$, whenever $\nu\neq\gamma$.
\begin{prop}\label{prop:lln2}
Suppose that
\begin{equation*}
\E_\nu(\tau_1) = \sum_y \nu_y\, \psi_y(\tau) < +\infty.
\end{equation*}
Then, for all $x,y,z\in E$, under $\bbP_x$-a.s. 
\begin{equation*}
\lim_{n\to+\infty} \frac{S_n}n \to \E_\nu(\tau_1), \quad
\lim_{t\to+\infty}  \mu_t(y,d\tau) = \frac{\nu_y}{ \E_\nu(\tau_1)}  \, \tau \,  \psi_y(d\tau),
\quad
\lim_{t\to+\infty}  Q_t(y,z) = \frac{ \nu_y \,  q_{yz}}{ \E_\nu(\tau_1)}.
\end{equation*}
\end{prop}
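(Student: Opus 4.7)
The plan is to combine the ergodic theorem for the embedded chain $(X_k)_{k\ge 0}$ with a renewal-theoretic inversion of $n\mapsto S_n$, all three limits being easy consequences once these two ingredients are in place. Throughout I write $\bar\tau:=\E_\nu(\tau_1)<+\infty$ and, since $(p_{x,y})$ is irreducible on the finite set $E$, I take $\nu$ to be the (unique) invariant measure of $(X_k)$, so that the stated limits make sense independently of the initial state.

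First I would establish a strong law of large numbers for the two-dimensional sequence $(X_{k-1},\tau_k)_{k\ge 1}$. Because $E$ is finite and $(X_k)$ is irreducible, the shift on $(X_k,\tau_{k+1})_{k\ge 0}$ is ergodic under $\bbP_\nu$, and conditionally on $(X_k)$ the $\tau_i$ are independent with $\tau_i\sim\psi_{X_{i-1}}$. A direct application of Birkhoff then gives, for any Borel $g:E\times[0,+\infty]\to\R$ with $\sum_y\nu_y\int|g(y,\tau)|\,\psi_y(d\tau)<+\infty$,
\[
\lim_{n\to+\infty}\frac{1}{n}\sum_{k=1}^n g(X_{k-1},\tau_k)=\sum_y\nu_y\int g(y,\tau)\,\psi_y(d\tau)\qquad\bbP_x\text{-a.s.}
\]
Taking $g(x,\tau)=\tau$ (integrable by hypothesis) yields $S_n/n\to\bar\tau$.

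Second, I would invert this into a renewal statement. The sandwich $S_{N_t}\le t<S_{N_t+1}$ together with $S_n/n\to\bar\tau$ forces $N_t\to+\infty$ and $N_t/t\to 1/\bar\tau$ almost surely. A small additional step shows $(t-S_{N_t})/t\to 0$: bounding $0\le t-S_{N_t}\le \tau_{N_t+1}$ and using a Borel--Cantelli argument on $\bbP_x(\tau_k>\eps k)$ (available because $\bar\tau<+\infty$ and $E$ is finite, so that $\sum_k\bbP_x(\tau_k>\eps k)\le\eps^{-1}\sum_y\nu_y\psi_y(\tau)$ up to constants) gives $\tau_{N_t+1}/t\to 0$ a.s. The result for $Q_t$ then follows immediately from Step~1 applied to $g(x,\tau)=\un{(x=y)}q_{y,z}$-type indicators and from $N_t/t\to 1/\bar\tau$, since $Q_t(y,z)=\frac{N_t+1}{t}\cdot\frac{1}{N_t+1}\sum_{k=1}^{N_t+1}\un{(X_{k-1}=y,X_k=z)}$.

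The convergence of $\mu_t$ in the narrow topology on $\mathcal{P}(E\times[0,+\infty])$ is the one point requiring a tiny bit of care: evaluating \eqref{def:emp mes} on a bounded continuous test function $f$, the boundary term is bounded by $\|f\|_\infty(t-S_{N_t})/t\to 0$, while the bulk term
\[
\frac{1}{t}\sum_{k=1}^{N_t}\tau_k\,f(X_{k-1},\tau_k)=\frac{N_t}{t}\cdot\frac{1}{N_t}\sum_{k=1}^{N_t}\tau_k\,f(X_{k-1},\tau_k)
\]
is handled by Step~1 with $g(x,\tau)=\tau f(x,\tau)$; this $g$ is unbounded but $L^1(\nu\otimes\psi)$ since $f$ is bounded and $\bar\tau<+\infty$. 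Combining with $N_t/t\to 1/\bar\tau$ produces the claimed limit $\bar\tau^{-1}\nu_y\,\tau\,\psi_y(d\tau)$. The only mildly nontrivial step is this $L^1$-ergodic application (as opposed to a bounded-function ergodic theorem), but it is entirely standard; no genuine obstacle is expected.
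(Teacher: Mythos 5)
Your proof is correct, but it packages the argument differently from the paper. The paper never invokes Birkhoff's ergodic theorem on the joint chain $(X_k,\tau_{k+1})$: instead it decomposes each quantity by state, introducing the successive visit times $\phi^y_k$ to $y$, observes (Lemma \ref{prop:1}) that $(\tau_{\phi^y_k})_{k\ge 1}$ is i.i.d.\ with law $\psi_y$, applies the scalar strong law to each of these streams, uses the classical ergodic theorem only for the finite chain $(X_k)$ (to get $M_n^y/n\to\nu_y$), and then deduces $N_t^y/t\to\nu_y/\E_\nu(\tau_1)$ from the elementary renewal theorem applied to the i.i.d.\ inter-visit durations $S_{\phi^y_{k+1}}-S_{\phi^y_k}$. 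Your route replaces all of this by a single appeal to the pointwise ($L^1$) ergodic theorem for the positive recurrent Markov chain $(X_{k-1},\tau_k)$ on $E\times\,]0,+\infty[$, plus the inversion sandwich $S_{N_t}\le t<S_{N_t+1}$ to get $N_t/t\to 1/\bar\tau$. That buys brevity and uniformity (one theorem handles $S_n$, $\mu_t$ and $Q_t$ simultaneously, including the unbounded $g=\tau f$), at the price of having to justify Harris recurrence/ergodicity of the two-component chain and the $L^1$ form of the theorem from an arbitrary starting point — the very facts the paper proves from scratch via its excursion decomposition. Two small remarks: your Borel–Cantelli step for $\tau_{N_t+1}/t\to 0$ is superfluous, since $S_{N_t}/t=(S_{N_t}/N_t)(N_t/t)\to 1$ already yields $(t-S_{N_t})/t\to 0$, which is the route the paper takes; and the bound $\sum_k\bbP_x(\tau_k>\eps k)\le\eps^{-1}\sum_y\nu_y\psi_y(\tau)$ should be replaced by $\sum_k\bbP_x(\tau_k>\eps k)\le\sum_y\sum_k\psi_y(\tau>\eps k)<\infty$, which uses $\psi_y(\tau)<\infty$ for every $y$ (a consequence of $\nu_y>0$, itself following from irreducibility on the finite set $E$).
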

For any $y\in E$ we denote 
\begin{equation*}
\phi^y_{1}:=\mathrm{inf}\{\ell>0:X_{\ell-1}=y \}, \qquad \phi^y_{k+1}:=\mathrm{inf}\{\ell> \phi^y_{k}:X_{\ell-1}=y\}, 
\quad k\geq 1.
\end{equation*} 
Moreover we define
\begin{equation*}
   N_t^y=\sum_{k=1}^{+\infty}\un{(S_k\leq t, X_{k-1}=y)}=\sum_{k=1}^{+\infty}\un{(S_{\phi^y_{k}}\leq t)}, \qquad \forall \, y\in E,
\end{equation*}
i.e. the number of times the process $(X_{k})_{k=0,\ldots,N_t-1}$ visits the site $y$,
and 
\begin{equation*}
M_n^y:=\sum_{k=0}^{n-1}\un{(X_{k}=y)}=\sum_{k=1}^{+\infty}\un{({\phi^y_{i}}\leq n)}, \qquad \forall \, y\in E.
\end{equation*}
i.e. the number of times the process $(X_{k})_{k=0,\ldots,n-1}$ visits the site $y$.
\begin{lemma}\label{prop:1}
For any $x,y\in E$, under $\bbP_x$ the sequence $(\tau_{\phi^y_{k}})_{k\geq 1}$ is a i.i.d. sequence
with common distribution $\psi_y(\cdot)$.
\end{lemma}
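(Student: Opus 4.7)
The plan is to exploit the defining conditional structure of the Markov renewal process: given the full sequence $(X_k)_{k\geq 0}$, the $\tau_i$'s are independent with $\tau_i\sim\psi_{X_{i-1}}$. The hitting indices $\phi^y_k$ are measurable with respect to $\sigma((X_k)_{k\geq 0})$, so once we condition on the chain, they act as deterministic (and pairwise distinct) selectors among the independent variables $\tau_i$.

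First I would check that $\phi^y_k<+\infty$ almost surely for every $k$: this follows from irreducibility of $(X_k)$ on the finite set $E$, which implies that every state is visited infinitely often under $\bbP_x$. Next, fixing bounded Borel sets $A_1,\ldots,A_n\subset\,]0,+\infty[\,$, I would compute
\begin{equation*}
\bbP_x\bigl(\tau_{\phi^y_1}\in A_1,\ldots,\tau_{\phi^y_n}\in A_n\bigm|(X_k)_{k\geq 0}\bigr).
\end{equation*}
Since $\phi^y_1,\ldots,\phi^y_n$ are $\sigma((X_k))$-measurable and take $n$ distinct values, the conditional independence of the family $(\tau_i)_{i\geq 1}$ given $(X_k)$ immediately factorises this as $\prod_{j=1}^n \psi_{X_{\phi^y_j-1}}(A_j)$. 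By the very definition of $\phi^y_j$, $X_{\phi^y_j-1}=y$, hence each factor equals $\psi_y(A_j)$.

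The conditional joint law is therefore the deterministic product measure $\psi_y^{\otimes n}$, independent of the realization of $(X_k)$. Integrating this identity against $\bbP_x$ yields
\begin{equation*}
\bbP_x\bigl(\tau_{\phi^y_1}\in A_1,\ldots,\tau_{\phi^y_n}\in A_n\bigr)=\prod_{j=1}^n \psi_y(A_j),
\end{equation*}
which, by a standard monotone class argument, extends to all Borel rectangles and proves that $(\tau_{\phi^y_k})_{k\geq 1}$ is i.i.d.\ with common law $\psi_y$ under $\bbP_x$. There is no real obstacle here; the only point worth being careful about is the measurability of the random indices $\phi^y_k$ with respect to the chain, which is clear from their definition as successive hitting indices of $y$ by $(X_{\ell-1})_{\ell\geq 1}$.
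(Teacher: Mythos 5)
Your argument is exactly the paper's: condition on $\sigma((X_k)_{k\geq 0})$, use that the $\phi^y_k$ are $\sigma((X_k))$-measurable and that $X_{\phi^y_k-1}=y$ to identify the conditional law of $(\tau_{\phi^y_k})_k$ as the deterministic product $\psi_y^{\otimes\infty}$, and then integrate out the conditioning. The explicit finite-dimensional computation and the remark on $\phi^y_k<+\infty$ a.s.\ are just slightly more verbose renderings of the same two-line reasoning.
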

\begin{proof}
Setting $\mathscr{F}^X:=\sigma(X_k, k\geq 0)$, we have that $({\phi^y_{k}})_{k\geq 1}$ is $\mathscr{F}^X$-measurable. Now, conditionally on
$\mathscr{F}^X$, the sequence $(\tau_j)_{j\geq 1}$ is independent. But conditionally on 
$\mathscr{F}^X$, $\tau_{\phi^y_{k}}$ has law $\psi_y(\cdot)$. Therefore, the conditional law of
$(\tau_{\phi^y_{k}})_{k\geq 1}$ given $\mathscr{F}^X$ is the law of a i.i.d. sequence with common
distribution $\psi_y(\cdot)$. Since this conditional law does not depend on $(X_k, k\geq 0)$, the result is proved.
\end{proof}
\begin{lemma}\label{prop:1.5}
For all $x\in E$, $\mathbb{P}_x$-a.s.
\begin{equation*}
\lim_{n\rightarrow+\infty} \frac{S_n}{n} = \E_\nu(\tau_1). 
\end{equation*}
\end{lemma}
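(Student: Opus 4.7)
The plan is to decompose $S_n$ according to the state visited by the chain just before each jump, and then combine the ergodic theorem for the finite irreducible Markov chain $(X_k)$ with a site-by-site application of the strong law of large numbers. Concretely, write
\begin{equation*}
S_n \;=\; \sum_{k=1}^n \tau_k \;=\; \sum_{y\in E} T_n^y, \qquad
T_n^y \,:=\, \sum_{k=1}^n \tau_k\,\un{(X_{k-1}=y)} \,=\, \sum_{j=1}^{M_n^y}\tau_{\phi^y_j},
\end{equation*}
so that $S_n/n = \sum_{y\in E}\bigl(M_n^y/n\bigr)\,\bigl(M_n^y\bigr)^{-1}\sum_{j=1}^{M_n^y}\tau_{\phi^y_j}$, where the sum over $y\in E$ is finite.

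Next I would invoke two independent ingredients. First, since $(X_k)_{k\geq 0}$ is irreducible on the finite set $E$, the classical ergodic theorem for finite Markov chains yields, under $\bbP_x$ for any initial state $x$, that $M_n^y/n \to \nu_y$ almost surely, where $\nu$ is the unique invariant probability of the transition matrix $(p_{x,y})_{x,y\in E}$; in particular $M_n^y\to+\infty$ a.s.\ for every $y$. Second, Lemma~\ref{prop:1} asserts that under $\bbP_x$ the random sequence $(\tau_{\phi^y_j})_{j\geq 1}$ is i.i.d.\ with common law $\psi_y$ (the conditional law given $\mathscr{F}^X$ does not depend on $(X_k)_k$, so independence holds unconditionally). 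The strong law of large numbers therefore gives, along the deterministic index $m\to+\infty$,
\begin{equation*}
\frac{1}{m}\sum_{j=1}^m \tau_{\phi^y_j} \,\longrightarrow\, \psi_y(\tau) \qquad \bbP_x\text{-a.s.}
\end{equation*}
(valid also if $\psi_y(\tau)=+\infty$, with limit $+\infty$). Substituting the random time $m=M_n^y$, which tends to $+\infty$ a.s., one obtains $T_n^y/n\to \nu_y\,\psi_y(\tau)$ on a full-measure event; taking the finite sum over $y\in E$ yields $S_n/n\to \sum_y \nu_y\,\psi_y(\tau)=\E_\nu(\tau_1)$.

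The only mild technical point is the evaluation of the SLLN along the random subsequence $M_n^y$ rather than a deterministic one, which I would handle by the standard argument that if $a_m\to a$ and $m_n\to+\infty$ a.s., then $a_{m_n}\to a$ a.s. Countable intersection over the finite set of states $y\in E$ then merges the individual full-measure events into a single one, giving the $\bbP_x$-a.s.\ convergence simultaneously for all $y$, which is all one needs.
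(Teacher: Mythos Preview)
Your proposal is correct and follows essentially the same approach as the paper: decompose $S_n/n$ as $\sum_{y\in E}(M_n^y/n)\cdot (M_n^y)^{-1}\sum_{j=1}^{M_n^y}\tau_{\phi^y_j}$, apply the ergodic theorem for the finite irreducible chain to get $M_n^y/n\to\nu_y$, and invoke Lemma~\ref{prop:1} together with the strong law of large numbers for the i.i.d.\ sequence $(\tau_{\phi^y_j})_{j\ge 1}$. If anything, you are slightly more explicit than the paper in handling the passage to the random index $M_n^y$ and the intersection of full-measure events over $y\in E$.
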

\begin{proof}
We can see that
\begin{equation*}
\frac{S_n}{n}=\frac{1}{n}\sum_{y\in E} \sum_{i=1}^n \un{(X_{i-1}=y)} \, \tau_i=\frac{1}{n}\sum_{y\in E}\sum_{i=1}^{M_n^y}\tau_{\phi^y_{i}}=\sum_{y\in E}\frac{M_n^y}{n}\frac{1}{M_n^y}\sum_{i=1}^{M_n^y}\tau_{\phi^y_{i}}.
\end{equation*}
By the ergodic theorem, for any $f:E\mapsto\R$, $\bbP_x$-a.s.
\begin{equation*}
\frac{1}{n}\sum_{k=0}^{n-1}f(X_k)\rightarrow\nu(f)=\sum_{y\in E}f(y)\, \nu_y.
\end{equation*}
Then we have $\bbP_x$-a.s.
\begin{equation*}
    \frac{M_n^y}{n}=\frac{1}{n}\sum_{k=0}^{n-1}\un{(X_k=y)}\rightarrow \nu_y.
\end{equation*}
Thus, by law of the large numbers, we have the conclusion.
\end{proof}

\begin{lemma}\label{prop:2}
For any $x\in E$, $\mathbb{P}_{x}$-a.s.
\begin{equation}\label{eq:3}
\lim_{t\rightarrow+\infty}\frac{N^{y}_{t}}{t}=\frac{\nu_y}{\,\E_\nu(\tau_1)}, \quad \forall \,y\in E,
\qquad \lim_{t\rightarrow+\infty}\frac{N_{t}}{t}=\frac1{\,\E_\nu(\tau_1)}.
\end{equation}
\end{lemma}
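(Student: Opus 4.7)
The plan is to deduce both limits by sandwiching $N_t$ via the inequality $S_{N_t}\le t<S_{N_t+1}$ and then by rewriting $N_t^y$ as an occupation time of the embedded chain.

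First I would check that $N_t\to+\infty$ $\bbP_x$-almost surely. Since $\psi_x(\{+\infty\})=0$ for every $x\in E$, each $\tau_i$ is finite a.s., and Lemma~\ref{prop:1.5} gives $S_n/n\to\E_\nu(\tau_1)\in(0,+\infty)$ a.s., forcing $S_n\to+\infty$ and hence $N_t\to+\infty$ almost surely.

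Next, by the very definition of $N_t$ one has $S_{N_t}\le t<S_{N_t+1}$; dividing by $N_t$ on the event $\{N_t\ge 1\}$ one obtains
\[
\frac{S_{N_t}}{N_t}\;\le\;\frac{t}{N_t}\;<\;\frac{S_{N_t+1}}{N_t+1}\cdot\frac{N_t+1}{N_t}.
\]
Since $N_t\to+\infty$ a.s., both extremes converge to $\E_\nu(\tau_1)$ by Lemma~\ref{prop:1.5}, so $t/N_t\to\E_\nu(\tau_1)$ and therefore $N_t/t\to 1/\E_\nu(\tau_1)$ almost surely, which is the second claim.

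For the first limit, I would observe that
\[
N_t^y\;=\;\sum_{k=1}^{\infty}\un{(S_k\le t,\,X_{k-1}=y)}\;=\;\sum_{k=1}^{N_t}\un{(X_{k-1}=y)}\;=\;M_{N_t}^y,
\]
so that $N_t^y/t=(M_{N_t}^y/N_t)(N_t/t)$. The ergodic theorem for $(X_k)$ (already invoked in the proof of Lemma~\ref{prop:1.5}) gives $M_n^y/n\to\nu_y$ almost surely; composing with $N_t\to+\infty$ and combining with $N_t/t\to 1/\E_\nu(\tau_1)$ yields $N_t^y/t\to\nu_y/\E_\nu(\tau_1)$. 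The statement is classical renewal theory and presents no real obstacle; the only point requiring care is the justification of $N_t\to+\infty$ almost surely, so as to evaluate $M_n^y/n$ and $S_n/n$ along the random index $n=N_t$, and this is precisely where the non-degeneracy $\psi_x(\{+\infty\})=0$ combined with the strong law of Lemma~\ref{prop:1.5} is used.
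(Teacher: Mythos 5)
Your proof is correct, but it takes a genuinely different route from the paper's. The paper proves the first limit directly: it observes that under $\bbP_x$ the cycle durations $(S_{\phi^y_{k+1}}-S_{\phi^y_k})_{k\ge 1}$ form an i.i.d.\ sequence, invokes the classical renewal theorem to get $N_t^y/t \to 1/\E_x(S_{\phi^y_2}-S_{\phi^y_1})$, then computes that mean cycle length via the strong Markov property and Kac's formula (\cite[Corollary I 3.6]{asmussen}) as $\E_\nu(\tau_1)/\nu_y$; the second limit then follows by summing $N_t=\sum_y N_t^y$. You instead establish the second limit first by the elementary sandwich $S_{N_t}\le t<S_{N_t+1}$ (which reduces to Lemma~\ref{prop:1.5} and the a.s.\ divergence $N_t\to+\infty$), and then derive the first limit from the identity $N_t^y=M_{N_t}^y$ and the ergodic theorem for the embedded chain, both of which are already at hand from the proof of Lemma~\ref{prop:1.5}. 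Your approach buys a more self-contained and arguably simpler argument: it avoids the renewal theorem and the Kac/cycle-decomposition computation entirely, at the mild cost of needing to justify the evaluation of almost-sure limits along the random index $n=N_t$ (which you correctly flag and which is immediate once $N_t\to+\infty$ a.s.\ is established). The paper's route is more in the spirit of regenerative-process theory and would also yield rates or CLT-type refinements if one needed them, whereas yours is the cleaner path to the stated a.s.\ limits.
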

\begin{proof}
Under $\bbP_x$, the sequence
$(S_{\phi^y_{k+1}}-S_{\phi^y_{k}})_{k\geq 1}$ is i.i.d. and
by the renewal theorem, $\mathbb{P}_{x}$-a.s.
\begin{equation*}
\lim_{t\rightarrow\infty}\frac{N^{x}_{t}}{t}=\frac{1}{\mathbb{E}_{x}(S_{\phi^y_{2}}-S_{\phi^y_{1}})}.
\end{equation*}
Now, by the strong Markov property of $(X_k)_{k\geq 0}$
\begin{equation*}
\mathbb{E}_x(S_{\phi^y_{2}}-S_{\phi^y_{1}})=\mathbb{E}_x\left(\sum^{\phi^y_{2}}_{i=\phi^y_{1}+1}\tau_{i}\right)
= \mathbb{E}_y\left(\sum^{\phi^y_{1}}_{i=1}\tau_{i}\right) = \sum_z\mathbb{E}_y\left(\sum^{\phi_{1}^y}_{i=1}\un{(X_{i-1}=z)}\,\tau_i\right).
\end{equation*}
Now, since $\mathbb{E}_y(\tau_i \, | \, (X_k)_{k\geq 0} ) = \psi_{X_{i-1}}(\tau)$, we have
\begin{equation*}
\begin{split}
\mathbb{E}_y\left(\sum^{\phi_{1}^y}_{i=1}\un{(X_{i-1}=z)}\,\tau_i\right) & = \mathbb{E}_y\left(\sum^{\phi_{1}^y}_{i=1}\un{(X_{i-1}=z)}\,\mathbb{E}_y(\tau_i \, | \, (X_k)_{k\geq 0} )\right) 
\\ & = \mathbb{E}_y\left(\sum^{\phi_{1}^y}_{i=1}\un{(X_{i-1}=z)}\right) \psi_z(\tau) =   \frac{\nu_z}{\nu_y} 
\, \psi_z(\tau)
\end{split}
\end{equation*}
by \cite[ Corollary I 3.6]{asmussen}. Therefore
\begin{equation*}
\mathbb{E}_x(S_{\phi^y_{2}}-S_{\phi^y_{1}}) =\frac1{\nu_y} \sum_z \, {\nu_z}\,\psi_z(\tau) 
\end{equation*}
and the proof of the first assertion is complete. Now, it is enough to note that 
\begin{equation*}
N_t  = \sum_{y\in E} N^y_t, \qquad t\geq 0,
\end{equation*}
and this concludes the proof.
\end{proof}

\begin{proof}[Proof of Proposition \ref{prop:lln2}]
Recalling the definition \eqref{def:emp mes} of the empirical measure $\mu_t$, we have for $y\in E$
\begin{equation}\label{eq:mu t law}
\begin{split}
\mu_t(y,\cdot)&=\frac{1}{t} \sum_{i=1}^{N_t^{y}}\tau_{\phi^y_{i}}\, \delta_{\tau_{\phi^y_{i}}}+\frac{t-S_{N_t}}{t} \, \un{(X_{N_t}=y)} \, \delta_{\tau_{N_t+1}}.
\end{split}
\end{equation}
By Lemmas \ref{prop:1.5} and \ref{prop:2}, $\bbP_x$-a.s.
\begin{equation*}
\lim_{t\to+\infty} \frac{S_{N_t}}{t} = \lim_{t\to+\infty} \frac{S_{N_t}}{N_t}\, \frac {N_t}{t} 
= 1 \, \Longrightarrow \, \lim_{t\to+\infty} \frac{t-S_{N_t}}{t} = 0.
\end{equation*}
On the other hand, by Lemma \ref{prop:1} and the law of large numbers, for all bounded measurable $f:\R_+\mapsto\R$ we have $\bbP_x$-a.s.
\begin{equation*}
\lim_n \frac{1}{n} \sum_{i=1}^{n}\tau_{\phi^y_{i}}\, f(\tau_{\phi^y_{i}}) = \int_0^\infty \tau\, f(\tau)\, \psi_y(d\tau)
\end{equation*}
and therefore by Lemma \ref{prop:2}
\begin{equation*}
\lim_{t\rightarrow+\infty}\frac{1}{t}\sum_{i=1}^{N^y_t}\tau_{\phi^y_{i}}\, f(\tau_{\phi^y_{i}})= 
\lim_{t\rightarrow+\infty}\frac{N^y_t}{t}\, \frac1{N^y_t}\sum_{i=1}^{N^y_t}\tau_{\phi^y_{i}}\, f(\tau_{\phi^y_{i}})= \frac{\nu_y}{\,\E_\nu(\tau_1)} \, \int_0^\infty \tau\, f(\tau)\, \psi_y(d\tau).
\end{equation*}
Therefore for all $g:E\times\,]0,+\infty[$ bounded and measurable we have
\begin{equation*}
\lim_{t\rightarrow+\infty} \mu_t(g) = \sum_{y\in E} \frac{\nu_y}{\,\E_\nu(\tau_1)} \int_0^\infty \tau\, g(y,\tau)\, \psi_y(d\tau).
\end{equation*}
We prove now the almost sure convergence of the empirical flow $Q_t(y,z)$. We have
\begin{equation*}
Q_t(y,z)=\frac1{t} \sum_{i=1}^{N_t^{y}}\un{(X_{\phi^y_{i}+1}=z)} , \qquad y,z\in E.
\end{equation*}
Setting $Y_i:=(X_{\phi^y_{i}+j})_{j=0,\ldots,\phi^y_{i+1}-\phi^y_{i}}$, then by the strong Markov property
under $\bbP_x$ the sequence $(Y_i)_{i\geq 1}$ is i.i.d. and its law is equal to the law of $(X_{j})_{j=0,\ldots,\phi^y_{1}}$ under $\bbP_y$. Then by the law of large numbers, $\bbP_x$-a.s.
\begin{equation*}
\lim_n \frac1{n} \sum_{i=1}^{n}\un{(X_{\phi^y_{i}+1}=z)} = \bbP_y(X_1=z) = q_{yz}.
\end{equation*}
Therefore, by Lemma \ref{prop:2}, $\bbP_x$-a.s.
\begin{equation*}
\lim_{t\rightarrow+\infty} Q_t(y,z) = \lim_{t\rightarrow+\infty} \frac{N^y_t}t \, \frac1{N^y_t}
\sum_{i=1}^{N_t^{y}}\un{(X_{\phi^y_{i}+1}=z)} = \frac{\nu_y \, q_{yz}}{\,\E_\nu(\tau_1)}.
\end{equation*}
The proof is complete.
\end{proof}

\section{Lower bound}\label{sec:lowbound}
For the proof of the lower bound, let us denote by $\mathbf{P}_t$ the law of $(\mu_t,Q_t)$. Then it is well known that it is enough to show the following
\begin{prop}\label{prop:lower bound}
For every $(\mu,Q)\in \U_0$ and $t>0$, there exists a family of probability measures $\mathbf{Q}_t$ such that $\mathbf{Q}_t\rightharpoonup\delta_{(\mu,Q)}$ as $t\uparrow +\infty$ and
\begin{equation*}
\limsup_{t\rightarrow+\infty}\frac{1}{t}\H(\mathbf{Q}_t\, | \,\mathbf{P}_t)\leq \mathrm{I}(\mu,Q).
\end{equation*}
\end{prop}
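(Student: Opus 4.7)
The plan is the standard tilting construction. Since the statement is trivial when $\I(\mu,Q)=+\infty$, I may assume $\I(\mu,Q)<+\infty$, which forces $(\mu,Q)\in\U_0$. By Lemma \ref{lem:omega dense}, the class $\U_{00}$ defined in \eqref{U00} is $\I$-dense in $\U_0$, so a diagonal extraction reduces the problem to constructing $\mathbf{Q}_t$ for $(\mu,Q)\in\U_{00}$. The defining features of $\U_{00}$---namely $\mu(x,\{+\infty\})=0$, $\mu(x,]0,+\infty[)>0$, and irreducibility of $p^Q$---are exactly what is needed for the tilted renewal process to be well-defined and ergodic.

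For such $(\mu,Q)$, let $\nu\in\mc P(E)$ be the unique invariant distribution of $p^Q$. The flow-balance in \eqref{def:U} forces $\nu_x=\bigl(\sum_y Q(x,y)\bigr)/C$ with $C:=\sum_{x,y}Q(x,y)<+\infty$, and since $\mu(x,\{+\infty\})=0$ a direct computation gives $\sum_y\nu_y\psi^\mu_y(\tau)=1/C$. Let $\mathbf{Q}_t$ denote the law of $(\mu_t,Q_t)$ under the Markov renewal process $\bbP^{(p^Q,\psi^\mu)}_\gamma$ with transition kernel $p^Q$, waiting time distributions $\psi^\mu_x$, and the same initial law $\gamma$ as the reference. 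Proposition \ref{prop:lln2}, applied to this tilted process, yields $\bbP^{(p^Q,\psi^\mu)}_\gamma$-a.s.\
\begin{equation*}
Q_t(y,z)\to C\,\nu_y\,p^Q_{y,z}=Q(y,z),\qquad \mu_t(y,d\tau)\to C\,\nu_y\,\tau\,\psi^\mu_y(d\tau)=\mu(y,d\tau),
\end{equation*}
so $\mathbf{Q}_t\rightharpoonup\delta_{(\mu,Q)}$.

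For the entropy bound I recycle the Radon-Nikodym derivative computed in Section \ref{sec:upbound} (its derivation uses only the product structure of the renewal Markov process, not $\Gamma$-admissibility; the natural choices here would be $H=\log(p^Q/p)$ and $\tau h_x(\tau)=\log(d\psi^\mu_x/d\psi_x)(\tau)$). By data processing,
\begin{equation*}
\H(\mathbf{Q}_t\,|\,\mathbf{P}_t)\leq \bbE^{(p^Q,\psi^\mu)}_\gamma\sum_{k=0}^{N_t}\Bigl[\log\tfrac{p^Q_{X_k,X_{k+1}}}{p_{X_k,X_{k+1}}}+\log\tfrac{d\psi^\mu_{X_k}}{d\psi_{X_k}}(\tau_{k+1})\Bigr].
\end{equation*}
Combining the ergodic theorem for $(X_k)$ under $p^Q$ with the renewal asymptotics $N_t/t\to C$ and the visit-count limit $N_t^x/t\to C\nu_x$ from Proposition \ref{prop:lln2}, the right-hand side divided by $t$ tends to
\begin{equation*}
C\sum_x\nu_x\bigl[\H(p^Q_{x,\cdot}\,|\,p_{x,\cdot})+\H(\psi^\mu_x\,|\,\psi_x)\bigr]=\sum_x\mu(x,1/\tau)\bigl[\H(p^Q_{x,\cdot}\,|\,p_{x,\cdot})+\H(\psi^\mu_x\,|\,\psi_x)\bigr],
\end{equation*}
which matches $\I(\mu,Q)$ because $\mu(x,\{+\infty\})=0$ for $(\mu,Q)\in\U_{00}$.

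The delicate step is upgrading the pointwise LLN to a statement about expectations, since the sum runs to the random index $N_t+1$ and the summands are unbounded above. I would handle this by a Wald-type argument: conditioning on $(X_k)_{k\geq 0}$ makes the $\tau$-summands independent with conditional mean $\H(\psi^\mu_{X_k}\,|\,\psi_{X_k})$, a quantity that is $\nu$-integrable because $\I(\mu,Q)<+\infty$; the $p^Q$-contribution is similar but bounded. Together with $\bbE^{(p^Q,\psi^\mu)}_\gamma(N_t+1)\sim Ct$, this yields the required entropy bound. The lift from $\U_{00}$ to $\U_0$ is then standard: Lemma \ref{lem:omega dense} supplies $(\mu_n,Q_n)\in\U_{00}$ with $(\mu_n,Q_n)\to(\mu,Q)$ and $\I(\mu_n,Q_n)\to\I(\mu,Q)$, after which one diagonalises in $(n,t)$.
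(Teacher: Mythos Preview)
Your approach matches the paper's overall strategy---reduce to $\U_{00}$ via Lemma~\ref{lem:omega dense}, tilt to $(p^Q,\psi^\mu)$, invoke Proposition~\ref{prop:lln2} for the weak convergence---but diverges in the entropy computation. You take $\mathbf Q_t$ to be the law of $(\mu_t,Q_t)$ under the \emph{fully} tilted renewal process, so the entropy is bounded by the expectation of a log-likelihood running up to the \emph{random} index $N_t+1$, and you then appeal to a Wald-type identity. The paper instead introduces a further parameter $\delta>0$ and a two-stage law $\bbP_{t,\delta}$: the first $T_t:=\lfloor(1+\delta)Ct\rfloor$ steps use $(p^Q,\psi^\mu)$, the remaining ones revert to $(p,\psi)$. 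On the high-probability event $\{S_{T_t}>t\}$ one has $N_t+1\le T_t$, so weak convergence to $\delta_{(\mu,Q)}$ still follows from Proposition~\ref{prop:lln2}; the payoff is that $\H(\bbP_{t,\delta}\,|\,\bbP)$ is a sum over the \emph{deterministic} range $1,\dots,T_t-1$, whose normalised limit is $(1+\delta)\I(\mu,Q)$ by the ordinary ergodic theorem for the chain $(X_k)$, with no stopping-time issues. Letting $\delta\downarrow0$ gives the bound.

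Your route is viable but the Wald step needs more than you indicate. Conditionally on $(X_k)$ the summands $Z_k=\log\tfrac{d\psi^\mu_{X_{k-1}}}{d\psi_{X_{k-1}}}(\tau_k)$ are independent yet unbounded in both directions; to justify optional stopping one should observe that $\bbE[Z_k^-\,|\,(X_j)]\le e^{-1}$ (via $u\log u\ge -e^{-1}$) so that $\bbE[|Z_k|\,|\,(X_j)]\le \max_x\H(\psi^\mu_x\,|\,\psi_x)+2e^{-1}<\infty$, and one also needs the \emph{elementary} renewal theorem $\bbE[N_t^x]/t\to C\nu_x$ (convergence in mean, not merely a.s.) to pass from the compensated sum to $\I(\mu,Q)$. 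These ingredients are available, so your argument closes; the paper's deterministic-horizon trick simply sidesteps them at the price of an extra limit in $\delta$.
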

Indeed, if Proposition \ref{prop:lower bound} is proved, then we reason as follows. Let $(\mu,Q)\in\U_0$ and let
$\mathcal{V}$ be an open neighborhood of $(\mu,Q)$ in the weak topology. Then
\begin{equation*}
\begin{split}
\log\mathbb{P}((\mu_t,Q_t)\in\mathcal{V})&=\log\int_{\mathcal{V}}\frac{d\mathbf{P}_t}{d\mathbf{Q}_t}d\mathbf{Q}_t=\log\left(\frac{1}{\mathbf{Q}_t(\mathcal{V})}\int_{\mathcal{V}}\frac{d\mathbf{P}_t}{d\mathbf{Q}_t}d\mathbf{Q}_t\right)+\log\mathbf{Q}_t(\mathcal{V})\\
&\geq\frac{1}{\mathbf{Q}_t(\mathcal{V})}\int_{\mathcal{V}}\log\frac{d\mathbf{P}_t}{d\mathbf{Q}_t}d\mathbf{Q}_t+\log\mathbf{Q}_t(\mathcal{V}).
\end{split}
\end{equation*}
by using Jensen's inequality. Now, since $x \log x \geq -e^{-1}$ for all $x\geq 0$, we obtain
\begin{equation*}
\begin{split}
\log \mathbf{P}_t(\mathcal{V})&\geq \frac{1}{\mathbf{Q}_t(\mathcal{V})}\left(-\H(\mathbf{Q}_t\, | \,\mathbf{P}_t)+\int_{\mathcal{V}^{c}}\log\frac{d\mathbf{P}_t}{d\mathbf{Q}_t}\frac{d\mathbf{Q}_t}{d\mathbf{P}_t}d\mathbf{P}_t\right)+\log\mathbf{Q}_t(\mathcal{V})\\
&\geq \frac{1}{\mathbf{Q}_t(\mathcal{V})}\left(-\H(\mathbf{Q}_t\, | \,\mathbf{P}_t)-e^{-1}\right)+\log\mathbf{Q}_t(\mathcal{V}).
\end{split}
\end{equation*}
Since $(\mu,Q)\in \U_0$, $\mathbf{Q}_t\rightharpoonup\delta_{(\mu,Q)}$ and $\mathcal{V}$ is open, then $\mathbf{Q}_t(\mathcal{V})\rightarrow 1$ as $t\rightarrow+\infty$. We obtain
\begin{equation*}
\liminf_{t\rightarrow +\infty}\frac{1}{t}\log \mathbf{P}_t(\mathcal{V})\geq -\limsup_{t\rightarrow +\infty}\frac{1}{t}\H(\mathbf{Q}_t\, | \,\mathbf{P}_t)\geq -\mathrm{I}(\mu,Q).
\end{equation*}
Therefore, for any open set $\mathcal{O}$ and for any $(\mu,Q)\in \U_0$,
\begin{equation*}
\liminf_{t\rightarrow +\infty}\frac{1}{t}\log \mathbf{P}_t(\mathcal{O})\geq  -\mathrm{I}(\mu,Q).
\end{equation*}
and by optimizing over $(\mu,Q)\in \mathcal{O}$ we have the lower bound.

\begin{proof}[Proof of Proposition \ref{prop:lower bound}]
Let us first suppose that $(\mu,Q)\in \U_{00}$ as defined in \eqref{U00}, i.e. $(\mu,Q)\in \U_0$, $\mu(x,+\infty)=0$,
\begin{equation*}
Z_x := \mu(x,1/\tau) = \sum_{z} Q(x,z)>0, \qquad \forall \, x\in E,
\end{equation*}
$\overline Q(x,y):= Q(x,y)/Z_x$ defines an irreducible probability transition matrix on $E$ and
$\mu(x,]0,+\infty[)>0$ for all $x\in E$, so that $\mu(x,1/\tau)>0$ for all $x\in E$. Moreover, since $(\mu,Q)\in \U_0$, then
$\mu(x,1/\tau)<+\infty$ for all $x\in E$ and in particular $\mu(x,\cdot)\ll \psi_x(\cdot)$ and $Q(x,\cdot)\ll q_{x\cdot}$.
Then it makes sense to define for $x,y\in E$ 
\begin{equation*}
H(x,y) := \log\left(\frac{Q(x,y)}{Z_x \, q_{xy} }\right) \un{(q_{xy}>0)},
\qquad h_x(\tau) := \frac1\tau
\log \left(
\frac{\mu(x,d\tau)}{Z_x \, \tau \, \psi_x(d\tau)} \right).
\end{equation*}
In this way, the probability kernels $q^H$ and $\psi^h$ defined in \eqref{hH} become
\begin{equation*}
q^H_{xy} = \frac1{Z_x} \, {Q(x,y)} =\overline Q(x,y), \qquad \psi_x^h(d\tau) =  \frac{1}{Z_x} \, \frac 1\tau \, \mu(x,d\tau).
\end{equation*}

We denote by $\bbP^{(h,H)}_x$ the law of a Markov renewal process $(X_k,\tau_{k+1})_{k\geq 0}$
with transition kernel $(q^H,\psi^h)$ and initial state $X_0=x$. By the irreducibility of $q^H=\overline Q$, the unique invariant measure $\nu^H$ of $q^H$ is given by 
\begin{equation*}
\nu^H_x:=\frac{Z_x}Z, \qquad Z := \sum_{y\in E} Z_y,
\end{equation*}
since we have
\begin{equation*}
\sum_{x\in E} \nu^H_x \, q^H_{xy} = \frac 1Z \sum_x Q(x,y) = \frac 1Z \sum_x Q(y,x)
= \frac{Z_y}Z = \nu^H_x.
\end{equation*}
Moreover, since $\mu(y,+\infty)=0$ for all $y\in E$ by the assumption $(\mu,Q)\in U_{00}$,
\begin{equation*}
\E^{(h,H)}_{\nu^H}(\tau_1) = \sum_{y\in E} \nu^H_y \, \psi^h_y(\tau) = \frac1Z \sum_{y\in E}
Z_y \, \frac1{Z_y} \int_0^\infty \tau \, \frac1\tau \, \mu(y,d\tau) = \frac 1Z < +\infty.
\end{equation*}
Then, we have by Proposition \ref{prop:lln2} below that $\bbP^{(h,H)}_x$-a.s.
\begin{equation*}
\mu_t(y,d\tau)ÔøΩ\rightharpoonup \, {Z} \,  \nu_y^H \, \tau \, \psi_y^h(d\tau) = \mu(y,d\tau), \qquad
Q_t(y,z) \to Z \, {\nu_y^H \, q_{y,z}^H} = Q(y,z)
\end{equation*}
as $t\to+\infty$. We set
$T_t:=\lfloor (1+\delta)Zt\rfloor$ and we denote by
$\bbP_{t,\delta}$ the law of $(X_k,\tau_{k+1})_{k\geq 0}$ under which
\begin{enumerate}
\item $(X_k,\tau_{k+1})_{k=0,\ldots,T_t}$ is a Renewal Markov process with transition rates $(\hat q, \hat \psi)$ and $X_0=x$ a.s.
\item conditionally on $(X_k,\tau_{k+1})_{k=0,\ldots,T_t}$,  $(X_k,\tau_{k+1})_{k\geq T_t}$  is a Renewal Markov process with transition rates $(q,\psi)$.
\end{enumerate}
Then we denote by $\mathbf{Q}_{t,\delta}$ the law of $(\mu_t,Q_t)$ under $\bbP_{t,\delta}$. Let us prove first that
\begin{equation}\label{sieg}
\lim_{\delta\downarrow 0}\lim_{t\uparrow +\infty} {\bf Q}_{t,\delta} = \delta_{(\mu,Q)}.
\end{equation}
By Lemma \ref{prop:1.5}, under $\bbP^{(h,H)}_x$ we have a.s.
\begin{equation*}
\lim_{t\to+\infty} \frac{S_{T_t}}t =
\lim_{t\to+\infty} \frac{S_{T_t}}{T_t} \,\frac{T_t}t=
\E^{(h,H)}_{\nu^H}(\tau_1)\, Z\, (1+\delta)=1+\delta.
\end{equation*}
However $S_{T_t}$
has the same law under $\bbP^{(h,H)}_x$ and under $\bbP_{t,\delta}$, so
for any $\delta>0$
\begin{equation}\label{mire}
\lim_{t\to+\infty} \bbP_{t,\delta}\left(S_{T_t}\leq t\right)=
\lim_{t\to+\infty} \bbP^{(h,H)}_x\left(\frac{S_{T_t}}t\leq 1\right)=0.
\end{equation}
Therefore, if we set
\begin{equation*}
D_{t,\delta}:=\left\{S_{T_t}>t\right\}
\end{equation*}
then, by \eqref{mire} we obtain that
for all $\delta>0$
\begin{equation}
\label{e:aa}
\lim_{t\to+\infty} \bbP_{t,\delta}\left(D_{t,\delta} \right)=1.
\end{equation}
We recall that $\{S_n>t\}=\{N_t+1\leq n\}$.
Therefore on $D_{t,\delta}$ we have $N_t+1\leq T_t$ and setting for any $f\in C_b(\cX)$ and $\eps>0$
\begin{equation*}
A^f_{t,\eps}:=\left\{ |\mu_t(f)-\mu(f)|>\eps, \ \sup_{x,y\in E}|Q_t(x,y)-Q(x,y)|>\eps \right\},
\end{equation*}
we have
\begin{equation*}
\bbP_{t,\delta}(A^f_{t,\eps})\leq\bbP^{(h,H)}_x(A^f_{t,\eps}\cap D_{t,\delta})+
\bbP_{t,\delta}(D_{t,\delta}^c).
\end{equation*}
By Proposition \ref{prop:lln2}
\begin{equation*}
\lim_{t\uparrow+\infty} \bbP^{(h,H)}_x\left(|\mu_t(f)-\mu(f)|>\eps, \sup_{x,y\in E}|Q_t(x,y)-Q(x,y)|>\eps \right)=0,
\end{equation*}
which, in view of \eqref{e:aa}, implies \eqref{sieg}. Now we estimate the relative entropy
\begin{equation*}
\H(\mathbf{Q}_{t,\delta}\,|\,\mathbf{P}_t)  \leq \H(\bbP_{t,\delta}\, | \, \mathbb{P}_x)
= t\, \bbE_{t,\delta}(J_{t,\delta}) = t\, \bbE^{(h,H)}_x(J_{t,\delta})
\end{equation*}
where
\begin{equation*}
J_{t,\delta} := \frac{1}{t}\sum_{i=1}^{T_t-1} \left(H(X_{i-1},X_{i})
+\tau_{i} \, h_{X_{i-1}
}(\tau_{i}) \right).
\end{equation*}
By the ergodic theorem, we have
\begin{equation*}
\begin{split}
& \lim_{t\to+\infty} \bbE^{(h,H)}_x\left(\frac{1}{t}\sum_{i=1}^{T_t-1} \left(H(X_{i-1},X_{i})
+\tau_{i} \, h_{X_{i-1}
}(\tau_{i}) \right)\right)
\\ & = \frac{1+\delta}{Z} \sum_x \nu^H_x \left( \sum_y q^H_{x,y} \, H(x,y) + \psi^h_x(\tau\, h_x)\right)
\\ & = (1+\delta)\left(\sum_{x,y} Q(x,y) \, H(x,y) + \sum_x \mu(x,h_x)\right) \leq (1+\delta)\I(\mu,Q)
\end{split}
\end{equation*}
since
\begin{equation*}
\sum_y q_{x,y} \, e^{H(x,y)}= \psi_x(e^{\tau \, h_x}) = 1, \qquad \forall \, x\in E.
\end{equation*}
Therefore
\begin{equation*}
\lim_{\delta \downarrow 0} \varlimsup_{t\uparrow +\infty}
\frac 1t \H({\bf Q}_{t,\delta} \,| \, {\bf P}_t)  \le  \I(\mu,Q).
\end{equation*}
Then there exists a map $t\mapsto \delta(t)>0$ vanishing as $t \uparrow +\infty$ such that
${\bf Q}_t:= {\bf Q}_{t,\delta(t)} \to \delta_{(\mu,Q)}$ and $\varlimsup_t t^{-1}
\H({\bf Q}_t \,| \, {\bf P}_t) \le \I(\mu,Q)$.
\end{proof}

\section{Contraction principles}\label{sec:contr}

Let us consider now the empirical measure of the process 
$(X_{N_t})_{t\geq 0}$ alone, namely
\begin{equation*}
  \pi_t(x) :=\frac1t \int_0^t \un{(X_{N_s}=x)} \, ds  =
    \frac{1}{t}\sum_{k=1}^{N_t}\tau_{k} \, \un{(X_{k-1}=x)}+\frac{t-S_{N_t}}{t} \, \un{(X_{N_t}=x)}, \qquad t>0. 
\end{equation*}
We want to obtain a LDP for $(\pi_t)_t$ as $t\to+\infty$. To this aim, we need the large-deviation functional of the empirical measure of the Markov chain $(X_k)_{k\geq 0}$: if
\[
\zeta_n(x) = \frac1n \sum_{k=0}^{n-1} \un{(X_k=x)}, \qquad x\in E,
\]
then the law of $(\zeta_n)_n$ satisfies a LDP in the probability measures on $E$ with good rate function
\[
I_{DV} (\zeta) = \sup_{u\in\,]0,+\infty[^E} \sum_{x\in E}
\zeta_x \, \log\left( \frac{u_x}{\sum_y p_{x,y}u_y} \right) =
\sum_{x\in E}
\zeta_x \, \log\left( \frac{u_x^*}{\sum_y p_{x,y}u_y^*} \right) 
\]
where $u^*\in \,]0,+\infty[^E$ is the only vector such that
\[
\zeta_y = \sum_{x\in E} \zeta_x \, p^*(x,y), \qquad {\rm where}
\qquad p^*(x,y) := \frac{p(x,y)\, u_y^*}{\sum_z p_{x,z}u_z^*}
\]
i.e. $u^*$ makes $\zeta$ an invariant measure for $p^*$: see \cite[Theorem IV.6, IV.7]{denhollander}.
\begin{prop}\label{exprI_1}
The law of $\pi_t$ satisfies a LDP in as $t\to+\infty$ with good rate functional
\[
\I_1(\pi) = \inf_{\zeta} \left( I_{DV}(\zeta) + \sum_{x\in E} \zeta_x \,\Lambda_x^*(\pi_x/\zeta_x)
\right).
\]
\end{prop}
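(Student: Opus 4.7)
The plan is to deduce the formula for $\I_1$ directly by contracting Theorem~\ref{thm:main result}, and then to evaluate the resulting infimum explicitly. The continuous projection $\Phi\colon \Lambda\to\mc P(E)$, $\Phi(\mu,Q)(x):= \mu(x,[0,+\infty])$, satisfies $\pi_t = \Phi(\mu_t, Q_t)$, so by the contraction principle
\[
\I_1(\pi) \;=\; \inf\bigl\{\I(\mu, Q) \,:\, (\mu, Q) \in \Lambda_0,\ \mu(\,\cdot\,,[0,+\infty]) = \pi\bigr\},
\]
the restriction to $\Lambda_0$ being automatic since $\I\equiv +\infty$ elsewhere.

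I would parametrise admissible $(\mu,Q)\in \Lambda_0$ through the visit-rate vector $\zeta_x := \mu(x,1/\tau) = \sum_y Q(x,y)$, which also satisfies $\sum_y Q(y,x) = \zeta_x$. Using \eqref{I}, the rate splits at fixed $\zeta$ as
\[
\I(\mu, Q) \;=\; \sum_x \zeta_x\,\H\bigl(p^Q_{x,\cdot} \,|\, p_{x,\cdot}\bigr) \;+\; \sum_x \Bigl[\zeta_x\,\H\bigl(\psi^\mu_x \,|\, \psi_x\bigr) + \xi(x)\,\mu(x,\{+\infty\})\Bigr],
\]
so that \emph{flow} and \emph{waiting-time} contributions decouple site by site. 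Since $p^Q$ must leave $\zeta$ invariant, minimising the first sum over all such stochastic matrices produces the Donsker--Varadhan rate $I_{DV}(\zeta)$, naturally extended to non-normalised $\zeta$ by the positive homogeneity of its variational formula.

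For the waiting-time part, the identity $\psi_x^\mu(d\tau) = (\zeta_x\tau)^{-1}\mu(x,d\tau)$ on $]0,+\infty[$ and the convention $0\cdot\infty = 0$ yield the mass balance $\mu(x,\{+\infty\}) = \pi_x - \zeta_x\,\bar m_x$ with $\bar m_x := \int_{]0,+\infty[}\tau\,\psi_x^\mu(d\tau)$, so positivity forces $\bar m_x \le \pi_x/\zeta_x$. For each $x$ I would compute
\[
\inf_{\phi\in \mc P(]0,+\infty[),\; \int\tau\,\phi = \bar m_x} \zeta_x\,\H(\phi\,|\,\psi_x) + \xi(x)\,(\pi_x-\zeta_x\bar m_x)
\]
first over $\phi$ at fixed $\bar m_x$ by Cramér--Legendre duality (of the type used in Lemma~\ref{lem:algfact}), and then over $\bar m_x\le \pi_x/\zeta_x$. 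The outer minimum is $\zeta_x\,\Lambda_x^*(\pi_x/\zeta_x)$, where $\Lambda^*_x$ is the Legendre transform of $\Lambda_x(c):=\log\psi_x(e^{c\tau})$ (with $\Lambda_x\equiv+\infty$ on $(\xi(x),+\infty)$). Summing over $x\in E$ and optimising over $\zeta$ delivers the stated formula.

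The main obstacle is the last duality step in the heavy-tailed regime $\xi(x)<+\infty$: there the optimiser may strictly satisfy $\bar m_x<\pi_x/\zeta_x$, filling the defect with a mass at $+\infty$, and verifying that the mixture coincides with $\Lambda_x^*(\pi_x/\zeta_x)$ rests on $\Lambda_x^*$ having asymptotic slope $\xi(x)$ at infinity --- precisely the content of \eqref{liminf} in the proof of Lemma~\ref{lem:omega dense}. To sidestep degenerate configurations ($\zeta_x=0$ or $\mu(x,\{+\infty\})>0$) when carrying out the decoupled minimisation, I would first establish the formula on the $\I$-dense class $\U_{00}$ where every ratio is well-defined, and then extend to all $(\mu,Q)\in\Lambda_0$ by invoking the $\I$-density provided by Lemma~\ref{lem:omega dense} together with the goodness from Proposition~\ref{prop:bon fonction de I}.
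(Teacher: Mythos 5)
Your proof is correct and follows essentially the same route as the paper: contract along $(\mu,Q)\mapsto\mu(\cdot\,,[0,+\infty])$, decouple the flow and waiting-time parts of $\I$ at fixed visit-rate vector $\zeta_x=\mu(x,1/\tau)$, obtain $I_{DV}(\zeta)$ from the Donsker--Varadhan variational formula (extended by homogeneity to unnormalised $\zeta$) and $\zeta_x\Lambda_x^*(\pi_x/\zeta_x)$ from Legendre duality on the waiting-time marginals. The one place you are more careful than the paper's own argument is the reduction to $\mu(x,\{+\infty\})=0$: the paper simply asserts ``we can suppose $\mu(x,+\infty)=0$'' without comment, whereas you justify it via the mass balance $\mu(x,\{+\infty\})=\pi_x-\zeta_x\bar m_x$, the convexity of $\Lambda_x^*$ together with its asymptotic slope $\xi(x)$ coming from \eqref{liminf} (so that $m\mapsto\zeta_x\Lambda_x^*(m)+\xi(x)(\pi_x-\zeta_x m)$ is non-increasing and its infimum over $m\le\pi_x/\zeta_x$ sits at the boundary), and the $\I$-density of $\U_{00}$ from Lemma~\ref{lem:omega dense} to pass from the nondegenerate class to all of $\Lambda_0$. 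That extra care is exactly what is needed in the heavy-tailed regime the paper treats, where minimising configurations may otherwise push mass to $+\infty$, and it fills an argument the paper leaves implicit rather than constituting a different method.
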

\begin{proof}
For all $(\mu,Q)\in\Lambda)$, let us denote $\pi(x):=\mu(x,]0,+\infty])$, $x\in E$. Then
$(\pi(x))_{x\in E}$ defines a probability measure on $E$. The map $(\mu,Q)\mapsto\pi$
is continuous and we obtain therefore by the contraction principle a LDP for the law of $(\pi_t(x))_{t>0}$
as $t\to+\infty$, where
The LDP rate function is given by
\begin{equation*}
\J(\pi) = \inf\{ \I(\mu,Q) : \, (\mu,Q)\in\U_0, \, \mu(x,]0,+\infty]) = \pi(x), \ x\in E\}.
\end{equation*}
In some particular case, a formula for $\J$ has already been computed, see for instance
Corollary 5.2 of Duffy and Torrisi \cite{duto}. 

\subsection{Computations}
We want to compute the infimum of
\[
\I(\mu,Q) =  \sum_{x\in E} \int_{[0,+\infty]} 
\mu(x,d\tau)\,\left[ 
\frac{\H\big( p^Q_{x,\cdot} \, |\,  p_{x,\cdot}  \big) 
+ \H\big( \psi^\mu_x \, | \, \psi_x  \big)}{\tau} +  \xi(x) \un {\{\infty\}}(\tau) \right]
\]
over all $(\mu,Q)\in\U_0$ such that $\mu(x,]0,+\infty]) = \pi(x)$, for all $x\in E$. We can write
\[
\begin{split}
\I(\mu,Q) = & \sum_{x,y\in E} Q(x,y) \, \log\left(\frac{p^Q(x,y)}{p(x,y)}\right) +
\sum_{x\in E} \int_{]0,+\infty[} \frac1\tau \, \mu(x,d\tau) \log\left(\frac{d\psi^\mu_x}{d\psi_x}(\tau)\right)
\\ & + \sum_{x\in E} \xi(x)\, \mu(x,+\infty).
\end{split} 
\]
We can suppose that $\mu(x,+\infty)=0$ for all $x\in E$. Now we can compute the infimum over $(\zeta(x), x\in E)$ of the infimum of $\I(\mu,Q)$ over $(\mu,Q)\in\U_0$ such that
$\mu(x,]0,+\infty]) = \pi(x)$ and $\mu(x,1/\tau)=\zeta(x)$, for all $x\in E$. Note that $\pi$ is a probability measure while $\zeta$ is positive but not necessarily normalized.
Using an analog of \cite[Theorems IV.6 and IV.7]{denhollander}, we obtain that the infimum of
\[
I^2_p(Q):=\sum_{x,y\in E} Q(x,y) \, \log\left(\frac{p^Q(x,y)}{p(x,y)}\right)
\]
over $Q$ such that $\sum_y Q(x,y)=\sum_yQ(y,x)=\zeta(x)$, for all $x\in E$, is equal to 
\[
I_{DV}(\zeta) := \sup_{u>0} \sum_x \zeta_x \, \log\left(\frac{u_x}{\sum_y p(x,y) \, u_y}\right),
\]
the unique optimal $u_y^*$ is such that 
\[
\zeta_x = \sum_y \zeta_y \, \frac{p(y,x) \, u_x^*}{\sum_z p(y,z) \, u_z^*}, \qquad
\forall \, x\in E,
\]
and the optimal $Q^*$ is
\[
Q^*(x,y) = \zeta_x \, \frac{p(x,y) \, u_y^*}{\sum_z p(x,z) \, u_z^*}, \qquad x,y\in E.
\]
Indeed, a simple computation shows that
\[
I^2_p(Q) = I^2_p(Q^*) + \H\left( p^Q \, | \, p^{Q^*} \right).
\]
Notice that in the classical Donsker-Varadhan case, $\zeta$ should be a probability measure, which is not necessarily the case here.

Therefore we are reduced to compute the infimum of
\[
\begin{split}
 & I_{DV}(\zeta) + 
\sum_{x\in E} \int_{]0,+\infty[} \frac1\tau \, \mu(x,d\tau) \log\left(\frac{d\psi^\mu_x}{d\psi_x}(\tau)\right)
\end{split} 
\]
over all $\zeta$ and $\mu$ such that $\mu(x,1/\tau)=\zeta_x$, $\mu(x,\R_+)=\pi_x$. This is equivalent to compute the infimum of
\[
I_{DV}(\zeta) + \sum_{x\in E} \zeta_x \, \H\left(\phi_x\,|\,\psi_x\right)
\]
over all $\zeta>0$ and $\phi_x\in\cP(]0,+\infty[)$ such that $\phi_x(\tau)=\pi_x/\zeta_x$. Now
\[
\inf\{\H\left(\phi_x\,|\,\psi_x\right) : \, \phi_x(\tau)=\pi_x/\zeta_x\}
= \Lambda_x^*(\pi_x/\zeta_x),
\]
where $\Lambda_x^*$ is the Legendre transform of $\Lambda_x(\theta):=\log(\psi_x(e^{\theta\tau}))$, $\theta\in\R$. Then we are reduced to
\[
\inf_{\zeta} \left( I_{DV}(\zeta) + \sum_{x\in E} \zeta_x \,\Lambda_x^*(\pi_x/\zeta_x)
\right).
\]
\end{proof}

\end{document}